\theoremstyle{definition}
\theoremstyle{plain}
\newtheorem{theorem}{Theorem}[section]
\newtheorem{remark}{Remark}[section]
\newtheorem{lemma}{Lemma}[section]
\numberwithin{equation}{section}
\newcommand{\vs}{\vspace}
\begin{document}

\title{Cauchy problem and dependency analysis for the logarithmic Schr\"odinger equation on waveguide manifold\footnote{  This work was partially supported by NNSFC (No. 12171493).}}

\author{Hichem Hajaiej$^{a}$, Jun Wang$^{b}$\footnote {Corresponding author. wangj937@mail2.sysu.edu.cn (J. Wang)
}, Zhaoyang Yin$^{b, c}$  \\
{\small $^{a}$ Department of Mathematics, California State University LA, Los Angeles, CA 90032, USA}\\
{\small $^{b}$Department of Mathematics, Sun Yat-sen University, Guangzhou, 510275, China } \\
{\small $^{c}$School of Science, Shenzhen Campus of Sun Yat-sen University, Shenzhen, 518107, China } \\
}

	\date{}

	\maketitle

\date{}

 \maketitle \vs{-.7cm}

  \begin{abstract}
In this paper, we develop a novel idea to study the $y$-dependence for the logarithmic Schr\"odinger equation on $\mathbb{R}^d \times \mathbb{T}^n$. Unlike \cite{STNT2014}(Analysis \& PDE, 2014) and \cite{HHYL2024}(SIAM J. Math. Anal., 2024), the scaling argument does not apply to our situation. We also consider the Cauchy problem, and address the orbital stability.
\end{abstract}

{\footnotesize {\bf   Keywords:}  Logarithmic Schr\"odinger equation; Normalized solutions;  Variational methods; Waveguide manifold.

{\bf 2010 MSC:}  35A15, 35B38, 35J50, 35Q55.
}

\section{ Introduction and main results}
This paper studies some properties of the solutions of the following logarithmic Schr\"odinger equation
\begin{equation} \label{eq1.1}
 \left\{\aligned
&-\Delta_{x,y} u+\lambda u =u\log u^2 ,\ (x, y) \in \mathbb{R}^d \times \mathbb{T}^n, \\
&\int_{\mathbb{R}^d \times \mathbb{T}^n}u^2dxdy=\Theta^2,
\endaligned
\right.
\end{equation}
where the mass $\Theta>0$, $\Delta_{x,y} =\sum\limits_{j=0}^d \partial_{x_j}^2+\Delta_y$, $\Delta_y$ is the Laplace-Beltrami operator on $\mathbb{T}^n$, the frequency $\lambda$ is unknown and to be determined.  In recent years, the logarithmic Schr\"odinger equation has received considerable attention. This class of
equations has some important physical applications in quantum mechanics, quantum optics, nuclear
physics, transport and diffusion phenomena, open quantum systems, effective quantum gravity, theory of
superfluidity and Bose-Einstein condensation, see \cite{KGZ2010} and the references therein. On the whole space, The  logarithmic Schr\"{o}dinger equation has received a lot of attention due to the various applications in different fields, see \cite{Białynicki-Birula} for a more detailed account.  The main challenges come from the fact that the nonlinearity in the energy functional is subcritical and that the embedding of  the subspace of radial functions of  $H^1(\mathbb{R}^N)$ is not compact into $L^2( \mathbb{R}^N)$. Therefore the classical variational approaches to show the compactness of ' the initially relatively compact'  minimizing sequence do not apply. Additionally the functional is not smooth in $H^1(\mathbb{R}^N)$. To the best of our knowledge, the first rigorous mathematical study of \eqref{eq1.1} on the whole space was undertaken in \cite{Cazenave2}. This had opened the door to many other contributions as \cite{Avenia-Montefusco-Squassina}. More recently, new developments addressed more general sublinear nonlinearities, see \cite{Mederski-Schino}, and \cite{Gallo-Schino}. Systems with critical exponents were studied in two publications, \cite{Hajaiej-Liu-Song-Zou}, and \cite{Hajaiej-Tianhao-Liu-Wenming Zou}. The Cauchy problem has been addressed by Carles and Al in several papers for different aspects, see \cite{RCIG2018} for example.

Recently, there has been an increasing interest in studying dispersive equations on the waveguide manifolds $\mathbb{R}^d\times\mathbb{T}^n$ whose mixed type geometric nature makes the underlying analysis rather challenging and delicate. For the nonlinear Schr\"odinger equation with pure power nonlinearity, there have been many results on waveguide manifolds, as shown in \cite{{XCZG2020},{ZHBP2014},{RKJM2021},{YL2023},{ZZH2021},{ZZJZ2021},{NTNV2012},{ADI2012},{XYHY2024}}. However, for $\mathbb{R}^N$ we are not aware of any previous results for the logarithmic Schr\"odinger equation on the waveguide manifolds. Terracini et. al in \cite{STNT2014}  considered the Cauchy problem
\begin{equation*}
  i\partial_tu-\Delta_{x,y} u-u|u|^\alpha=0,\ u|_{t=0}=u_0,
\end{equation*}
where $(x,y)\in\mathbb{R}_x^d\times M_y^k$, $M_y^k$ is a compact Riemannian manifold. They proved that above a critical mass, the ground states have nontrivial $M_y^k$ dependence.  After that, Hajaiej et. al in \cite{HHYL2024} considered the following equation:
\begin{equation*}
  \Delta_{x,y}^2u-\beta \Delta_{x,y}u + \theta u = |u|^{\alpha}u, \ (x,y)\in\mathbb{R}^d\times\mathbb{T}^n ,
\end{equation*}
where $\beta \in \mathbb{R}$ and $\alpha \in (0, \frac{8}{d+n})$. They extended the conclusion in \cite{STNT2014} to four order Schr\"odinger equation by using new scaling argument. It is natural to ask whether the same conclusion holds for Schr\"odinger equations without scaling structures? For example, Schr\"odinger-Poisson equation, Schr\"odinger equation with potential, logarithmic Schr\"odinger equation, Choquard equation etc. This paper addresses the logarithmic equation by developing a new approach to study the dependence of ground state solutions.

The main idea of the proof is as follows: Firstly, under the excitation introduced in \cite{STNT2014}, we still consider the auxiliary functional $\mathcal{I}_\mu$. Due to the appearance of logarithmic terms, the scaling method fails. To overcome this difficulty, we start from the structure of the auxiliary equation, then consider the cases $\mu\rightarrow0$ and $\mu\rightarrow\infty$ separately. For the previous scenario, we construct a new test function to obtain that the energy of the solution is lower. It is evident that the solution of the original equation exhibits nontrivial $y$-dependence when the parameter tends to $0$. When the parameters tend towards infinity, the problem becomes more complex. Firstly, it is necessary to ensure that the extra term is $0$ when the parameter tends to infinity. Secondly, in order to demonstrate the independence with respect to $y$, it is necessary first to demonstrate the existence of a solution $u$ such that $\nabla _yu=0$, based on an important observation, namely \eqref{eq3.3} in Lemma \ref{L3.1}.

The natural energy functional associated with the equation \eqref{eq1.1} is given by
\begin{equation*}
\mathcal{I}(u)=\frac{1}{2} \int_{\mathbb{R}^d\times\mathbb{T}^n}[|\nabla_{x,y}u|^2+|u|^2] d xdy -\frac{1}{2} \int_{\mathbb{R}^d\times\mathbb{T}^n}u^{2}\log u^2 d xdy.
\end{equation*}
However, $\mathcal{I}(u)$ is not well defined in $H^1(\mathbb{R}^d\times\mathbb{T}^n)$ since there exist functions $u\in H^1(\mathbb{R}^d\times\mathbb{T}^n)$ such that $\int_{\mathbb{R}^d\times\mathbb{T}^n}u^{2}\log u^2 d xdy=-\infty$, which gives the possibility that $\mathcal{I}(u)=\infty$. In order to handle this issue, we consider a decomposition of the form
$$
F_2(s)-F_1(s)=\frac{1}{2} s^2 \log s^2, \  \forall s \in \mathbb{R},
$$
where $F_1\in C^1$ and $F_1$ is a nonnegative convex function, $F_2\in C^1$ satisfies Sobolev subcritical growth. This decomposition has been explored in many works (see \cite{{MSAS2015},{COACJ2020},{COACJ22023}}). More precisely, for a fixed $\delta>0$ small enough, let
\begin{equation*}
  F_1(s):=\left\{\aligned
&0,  &s  =0, \\
&-\frac{1}{2}  s ^2 \log s^2, & 0<|s|  <\delta, \\
&-\frac{1}{2} s^2\left(\log \delta^2+3\right)+2 \delta |s|-\frac{\delta^2}{2}, & |s| \geq \delta
\endaligned
\right.
\end{equation*}
and
$$
F_2(s):= \begin{cases}0, & |s|<\delta, \\ \frac{1}{2} s^2 \log \left(\frac{s^2}{\delta^2}\right)+2 \delta |s|-\frac{3}{2} s^2-\frac{\delta^2}{2}, & |s| \geq \delta\end{cases}
$$
for every $s \in \mathbb{R}$. Hence,
\begin{equation}\label{eq1.2}
  F_2(s)-F_1(s)=\frac{1}{2} s^2 \log s^2, \quad \forall s \in \mathbb{R} .
\end{equation}
It is easy to see that $F_1$ and $F_2$ satisfy the following properties:
\begin{itemize}
\item[$(f_1)$] $F_1$ is an even function with $F_1^{\prime}(s) s \geq 0$ and $F_1 \geq 0$. Moreover, $F_1 \in C^1(\mathbb{R}, \mathbb{R})$  is convex for $\delta \approx 0^{+}$.

\item[$(f_2)$] $F_2 \in C^1(\mathbb{R}, \mathbb{R}) \cap C^2((\delta,+\infty), \mathbb{R})$ and there exists $C=C_p>0$ such that
$$
\left|F_2^{\prime}(s)\right| \leq C|s|^{p-1}, \quad \forall s \in \mathbb{R},\ p \in\left(2,2^*\right).
$$

\item[$(f_3)$] $s \mapsto \frac{F_2^{\prime}(s)}{s}$ is a nondecreasing function for $s>0$ and a strictly increasing function for $s>\delta$.

\item[$(f_4)$] $\lim\limits_{s \rightarrow \infty} \frac{F_2^{\prime}(s)}{s}=\infty$.
\end{itemize}
This decomposition guarantees that $\mathcal{I}(u)$ can be expressed as the sum of a $C^1$-functional and a convex, lower semi-continuous functional. In this way, the critical point theory of functionals developed in \cite{AS1986} can be used to find solutions
of \eqref{eq1.1}. Following, \cite{COACJ2024}, we consider the reflexive and separable Orlicz space defined as
\begin{equation*}
  L^{F_1}(\mathbb{R}^d\times\mathbb{T}^n)=\{u\in L^1_{loc}(\mathbb{R}^d\times\mathbb{T}^n):\int_{\mathbb{R}^d\times\mathbb{T}^n}F_1(|u|)dxdy<+\infty\}
\end{equation*}
and define $X=H^1(\mathbb{R}^d\times\mathbb{T}^n)\cap L^{F_1}(\mathbb{R}^d\times\mathbb{T}^n)$ with respect to the norm $\|u\|_{X}:=\|u\|_{H^1}+\|u\|_{L^{F_1}}$. Obviously, the embeddings $X\hookrightarrow H^1$ and $X\hookrightarrow L^{F_1}$ are continuous. Therefore, we can rewrite the functional $\mathcal{I}: X\rightarrow\mathbb{R}$ as
\begin{eqnarray*}
\mathcal{I}(u)=\frac{1}{2} \int_{\mathbb{R}^d\times\mathbb{T}^n}[|\nabla_{x,y}u|^2+|u|^2] d xdy+\int_{\mathbb{R}^d\times\mathbb{T}^n} F_1(u) d xdy-\int_{\mathbb{R}^d\times\mathbb{T}^n} F_2(u) d xdy
\end{eqnarray*}
and the mass constraint manifold is defined by
\begin{equation*}
S_{\Theta}=\left\{u \in X:\|u\|_2 =\Theta\right\}.
\end{equation*}

The main results of this paper are to study the minimizers of the minimization problems
$$
m_{\Theta, \mu}=\inf\{\mathcal{I}_\mu:u\in S_\Theta \},
$$
where
\begin{equation*}
  \mathcal{I}_\mu(u)=\frac{1}{2} \int_{\mathbb{R}^d\times\mathbb{T}^n}[|\nabla_{x}u|^2+|u|^2] d xdy +\frac{\mu}{2} \int_{\mathbb{R}^d\times\mathbb{T}^n}|\nabla_yu|^2dxdy-\frac{1}{2} \int_{\mathbb{R}^d\times\mathbb{T}^n}u^{2}\log u^2 d xdy.
\end{equation*}
Here $\mu\in(0,+\infty)$.
Similar to the proof of Theorem \ref{eq1.1}, there exists $u_\mu$ such that $\mathcal{I}_\mu(u_\mu)=m_{\Theta, \mu}$. In order to study the $y$-dependence of the ground states, we need some notation on the Euclidean space $\mathbb{R}_x^d$. Define
\begin{equation*}
\widetilde{S}_{\Theta}=\left\{u \in X(\mathbb{R}_x^d):\|u\|_2 =\frac{\Theta}{(2\pi)^\frac{n}{2}}\right\}
\end{equation*}
and
$$
\widetilde{m}_{\Theta}=\inf\{\widetilde{\mathcal{I}}(u):u\in \widetilde{S}_{\Theta} \},
$$
where
\begin{equation*}
  \widetilde{\mathcal{I}}(u)=\frac{1}{2} \int_{\mathbb{R}^d}[|\nabla_{x}u|^2+|u|^2] d x -\frac{1}{2} \int_{\mathbb{R}^d}u^{2}\log u^2 d x.
\end{equation*}

\begin{theorem}\label{t1.1}
The problem \eqref{eq1.1}
has a couple $(u,\lambda)$ solution, where $u$ is positive,
radial and $\lambda>1$.
\end{theorem}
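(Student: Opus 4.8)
The plan is to produce $(u,\lambda)$ as a minimizer--multiplier pair for the constrained problem $m_\Theta:=\inf_{u\in S_\Theta}\mathcal I(u)$ and to read off the three assertions from the variational characterization. First I would show that $\mathcal I$ is bounded below and coercive on $S_\Theta$. Using the splitting \eqref{eq1.2} together with a logarithmic Sobolev inequality, the term $\int u^2\log u^2$ is bounded above by $\Theta^2\log\Theta^2+C\log\bigl(1+\|\nabla_{x,y}u\|_2^2\bigr)$, which is dominated by the kinetic energy; hence $\mathcal I(u)\to+\infty$ as $\|\nabla_{x,y}u\|_2\to\infty$ and any minimizing sequence $(u_n)\subset S_\Theta$ is bounded in $H^1(\mathbb R^d\times\mathbb T^n)$, with $\int F_1(u_n)$ also bounded, so $(u_n)$ is bounded in $X$. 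Since $|\nabla|u_n||\le|\nabla u_n|$ and every remaining term depends only on $|u_n|$, I may replace $u_n$ by $|u_n|$ and then by a Schwarz rearrangement in the $x$--variable, so the minimizing sequence can be taken nonnegative and radial in $x$.

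The decisive step is compactness of the symmetrized minimizing sequence, and this is where I expect the real work. Passing to a subsequence, $u_n\rightharpoonup u$ weakly in $X$ and pointwise a.e.; the convex, lower semicontinuous part obeys $\int F_1(u)\le\liminf_n\int F_1(u_n)$, and by $(f_2)$ the $F_2$--term is Sobolev--subcritical, so the compact embedding $H^1_{\mathrm{rad}}\hookrightarrow L^p$ for $2<p<2^*$ gives $\int F_2(u_n)\to\int F_2(u)$. Together with weak lower semicontinuity of $\|\nabla_{x,y}\cdot\|_2^2$ and $\|\cdot\|_2^2$, this yields $\mathcal I(u)\le m_\Theta$ \emph{once} $\|u\|_2=\Theta$, so the whole point is to prevent $L^2$ mass from leaking to spatial infinity, the $L^2$ embedding being noncompact even on radial functions. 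I would settle this by concentration--compactness: vanishing is ruled out by the logarithmic structure, since spreading mass out at small amplitude forces the nonnegative term $\int F_1(u_n)$ (which behaves like $-\tfrac12\int u_n^2\log u_n^2$ there) to stay large, keeping $\mathcal I(u_n)$ above $m_\Theta$; and dichotomy is excluded by the strict subadditivity $m_\Theta<m_a+m_b$ for $a^2+b^2=\Theta^2$, $a,b>0$, which I would derive from the amplitude--scaling identity $m_{\tau\Theta}=\tau^2 m_\Theta-\tau^2(\log\tau)\,\Theta^2$. Hence $u_n\to u$ strongly in $L^2$, $\|u\|_2=\Theta$, and $u$ attains $m_\Theta$.

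With a nonnegative radial minimizer in hand, I would invoke the nonsmooth critical point theory of \cite{AS1986} for the decomposition $\mathcal I=\Phi+\Psi$, where $\Phi$ is $C^1$ and $\Psi(u)=\int F_1(u)$ is convex and lower semicontinuous. The constrained minimizer then satisfies the corresponding Euler--Lagrange inclusion, which unwinds through $F_2'-F_1'=s\log s^2+s$ to the equation $-\Delta_{x,y}u+\lambda u=u\log u^2$ with $\lambda\in\mathbb R$ the Lagrange multiplier. Testing against $u$ gives $\|\nabla_{x,y}u\|_2^2+\lambda\Theta^2=\int u^2\log u^2$, and comparing with $2m_\Theta=\|\nabla_{x,y}u\|_2^2+\Theta^2-\int u^2\log u^2$ eliminates the gradient term and yields the clean relation $\lambda=1-2m_\Theta/\Theta^2$. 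Positivity of $u$ follows from $u\ge0$, elliptic regularity away from the zero set, and the strong maximum principle, while radial symmetry in $x$ has been built in through the rearrangement.

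It remains to prove $\lambda>1$, which by the identity $\lambda=1-2m_\Theta/\Theta^2$ is exactly $m_\Theta<0$. I would establish this by an explicit test function: inserting the mass--normalized Gaussian profile $w_a(x)=\Theta(2\pi)^{-n/2}(a/\pi)^{d/4}e^{-a|x|^2/2}$, extended independently of $y$, into $\mathcal I$ and optimizing over the width $a$ produces a strictly negative value, whence $m_\Theta<0$ and therefore $\lambda>1$. The genuinely hard part throughout is the compactness/no--mass--loss step of the second paragraph: neither translation invariance in $x$ nor the $L^2$ embedding of radial functions is compact, and it is precisely the scaling identity for $m_\Theta$, the strict subadditivity it produces, and the logarithmic coercivity that let concentration--compactness close the argument.
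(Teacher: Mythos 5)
Your overall scheme --- minimizing $\mathcal{I}$ on $S_\Theta$ with the $F_1/F_2$ decomposition, Szulkin's nonsmooth critical point theory, the multiplier identity $\lambda=1-2m_\Theta/\Theta^2$, and the reduction of $\lambda>1$ to $m_\Theta<0$ --- is the paper's scheme, but your proof of $m_\Theta<0$ fails, and this is the load-bearing step. With your Gaussian $w_a$ a direct computation gives
\begin{equation*}
\mathcal{I}(w_a)=\Theta^2\left[\frac{ad}{4}-\frac{d}{4}\log a+\frac{d}{4}+\frac{d}{4}\log\pi+\frac{1}{2}+\frac{n}{2}\log(2\pi)-\frac{1}{2}\log\Theta^2\right],
\end{equation*}
whose minimum over $a>0$ (at $a=1$) equals $\Theta^2\bigl[\frac{d}{2}+\frac{1}{2}+\frac{n}{2}\log(2\pi)+\frac{d}{4}\log\pi-\frac{1}{2}\log\Theta^2\bigr]$, and this is strictly \emph{positive} whenever $\Theta^2<e^{d+1}(2\pi)^n\pi^{d/2}$. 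So optimizing the width yields $m_\Theta<0$ only above a mass threshold, and no cleverer test function can do better: the same Jensen-plus-Gagliardo--Nirenberg argument you invoke for coercivity gives $\int u^2\log(u^2/\Theta^2)\,dxdy\le\frac{(d+n)\Theta^2}{2}\log\bigl(C\|u\|_{H^1}^2/\Theta^2\bigr)$, and minimizing the resulting lower bound for $\mathcal{I}$ over the value of $\|u\|_{H^1}^2$ gives $m_\Theta\ge\Theta^2(c_{d,n}-\log\Theta)>0$ for small $\Theta$. Hence $m_\Theta<0$, and with it $\lambda>1$, is genuinely false at small mass; worse, your exclusion of vanishing (``keeping $\mathcal{I}(u_n)$ above $m_\Theta$'') also presupposes $m_\Theta<0$, so the gap propagates into your compactness step. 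For what it is worth, the paper's own Lemma 2.3 has the same defect: the requirement $\frac12 r^{-2}\theta\Theta^2<\frac14\Theta^2$ forces $r>\sqrt{2\theta}$, and by Faber--Krahn ($\theta\ge C_d|\Omega|^{-2/d}$) this is incompatible with the stated upper bound $r^{d/2}<\Theta/(4|\Omega\times\mathbb{T}^n|^{1/2})$ once $\Theta$ is below a dimensional constant, uniformly in $\Omega$. Existence of a minimizer for every $\Theta$ could still be rescued from the large-mass case through the bijection $u\mapsto\tau u$ of $S_\Theta$ onto $S_{\tau\Theta}$ and the identity $\mathcal{I}(\tau u)=\tau^2\mathcal{I}(u)-\tau^2\Theta^2\log\tau$, but the sign claim $\lambda>1$ cannot.

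A second, independent gap is your compactness mechanism. Your minimizing sequence is radial only in the $x$-variable, so the embedding you cite, $H^1_{\mathrm{rad}}\hookrightarrow L^p$, does not apply as stated: what you need is compactness of the embedding of $x$-radial functions of $H^1(\mathbb{R}^d\times\mathbb{T}^n)$ into $L^p_{x,y}$, which is not the classical Strauss lemma (the pointwise radial decay is only controlled after integration in $y$), needs its own proof, and in any case requires $d\ge2$; for $d=1$ ``radial'' means ``even'' and there is no compactness at all, while the theorem carries no restriction on $d$. You also use without comment the partial (Steiner-type) P\'olya--Szeg\H{o} inequality to symmetrize in $x$ alone, and your dichotomy step tacitly needs a Br\'ezis--Lieb splitting for the logarithmic term. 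The paper sidesteps all of this: its Lemma 2.6 proves convergence of minimizing sequences \emph{up to translations} by concentration--compactness, using strict subadditivity (its Lemma 2.4, which is your scaling identity) together with a logarithmic Br\'ezis--Lieb lemma (its Lemma 2.5), valid for every $d\ge1$ and with no symmetry; symmetrization is applied only afterwards, to the minimizer itself, to obtain radiality. I would restructure accordingly: establish $m_\Theta<0$ first (which is only possible for large $\Theta$), run translation-based concentration--compactness, and symmetrize at the end.
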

The proof of Theorem \ref{t1.1} is based on the concentration-compactness principle. Next, we state the dependency result.
\begin{theorem}\label{t1.2}
For any $\Theta \in (0,+\infty )$,

$\mathrm{(1)}$ If $m_{\Theta}<(2 \pi)^{n} \widetilde{m}_{\frac{\Theta}{(2\pi)^{\frac{n}{2}}}}$, then any minimizer $u$ of $m_{\Theta}$ satisfies $\partial_y u \neq 0$;

$\mathrm{(2)}$ If $m_{\Theta}=(2 \pi)^{n} \widetilde{m}_{\frac{\Theta}{(2\pi)^{\frac{n}{2}}}}$, then any minimizer $u$ of $m_{\Theta}$ satisfies $\partial_y u=0$ provided $y\rightarrow\widetilde{m}_\Theta(y)$ is non-decreasing.
\end{theorem}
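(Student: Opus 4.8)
\emph{Reduction to the fibres and part (1).} The plan starts from the elementary observation that testing $m_\Theta$ against $y$-independent competitors is sharp. For such $u$ one has $\partial_y u=0$ and, since $|\mathbb{T}^n|=(2\pi)^n$, the identity $\mathcal{I}(u)=(2\pi)^n\widetilde{\mathcal{I}}(u)$ together with $\|u\|_{L^2(\mathbb{R}^d)}=\Theta/(2\pi)^{n/2}$; hence $m_\Theta\le(2\pi)^n\widetilde m_{\Theta/(2\pi)^{n/2}}$ for every $\Theta$. Part (1) then follows by contraposition: a minimizer with $\partial_y u=0$ would be admissible for $\widetilde m$ and yield $m_\Theta=(2\pi)^n\widetilde{\mathcal I}(u)\ge(2\pi)^n\widetilde m_{\Theta/(2\pi)^{n/2}}$, contradicting the strict inequality. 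This step is immediate once the reduction identity is in hand.

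\emph{A multiplicative scaling that survives the logarithm.} Although the spatial scaling used in \cite{STNT2014,HHYL2024} fails here, the logarithmic nonlinearity is invariant under the amplitude dilation $v\mapsto cv$: a direct computation gives $\widetilde{\mathcal I}(cv)=c^2\widetilde{\mathcal I}(v)-\tfrac12 c^2(\log c^2)\|v\|_2^2$, which forces the exact law $\widetilde m_s=s^2(\widetilde m_1-\log s)$ for the $\mathbb{R}^d$-ground energy at mass $s$. I would use this to decouple the $x$- and $y$-behaviour. Writing $\rho(y)=\|u(\cdot,y)\|_{L^2(\mathbb{R}^d)}^2$ for $u\in S_\Theta$ and combining the fibrewise bound $\widetilde{\mathcal I}(u(\cdot,y))\ge\widetilde m_{\sqrt{\rho(y)}}$ with this law and the diamagnetic inequality $\int_{\mathbb{T}^n}|\nabla_y\sqrt{\rho}|^2\le\int|\nabla_y u|^2$, one is led to
\begin{equation*}
\mathcal{I}(u)\ \ge\ \widetilde m_1\Theta^2+\mathcal J(\sqrt{\rho}),\qquad \mathcal J(\sigma):=\tfrac12\int_{\mathbb{T}^n}|\nabla_y\sigma|^2-\tfrac12\int_{\mathbb{T}^n}\sigma^2\log\sigma^2.
\end{equation*}
The product test function $u=\sigma(y)w_1(x)$, with $w_1$ a unit-mass Gausson, saturates all three estimates, so in fact $m_\Theta=\widetilde m_1\Theta^2+\mathbf m^{\mathbb{T}}_\Theta$, where $\mathbf m^{\mathbb{T}}_\Theta$ is the ground energy of the \emph{same} logarithmic problem posed on the compact torus $\mathbb{T}^n$ under $\int_{\mathbb{T}^n}\sigma^2=\Theta^2$. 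This factorization is presumably the content of \eqref{eq3.3} in Lemma \ref{L3.1}.

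\emph{The threshold forces a constant profile.} The constant profile $\sigma_0\equiv\Theta/(2\pi)^{n/2}$ satisfies $\mathcal J(\sigma_0)=(2\pi)^n\widetilde m_{\Theta/(2\pi)^{n/2}}-\widetilde m_1\Theta^2$, so the equality hypothesis of part (2) says exactly that $\sigma_0$ minimizes $\mathbf m^{\mathbb{T}}_\Theta$. Consequently the profile $\sqrt{\rho}$ of \emph{any} minimizer $u$ of $m_\Theta$ is itself a minimizer of the torus problem. Here the monotonicity assumption enters: a minimizing profile that is non-decreasing on $\mathbb{T}^n$ must, by periodicity, be constant, whence $\rho\equiv\Theta^2/(2\pi)^n$. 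Feeding $\rho=\mathrm{const}$ back into the diamagnetic step turns it into the Cauchy--Schwarz equality $\nabla_y u=c(y)u$, and $\nabla_y\rho=2c(y)\rho=0$ then forces $c\equiv0$, i.e. $\partial_y u=0$.

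\emph{Main obstacle.} The crux is the identity $m_\Theta=\widetilde m_1\Theta^2+\mathbf m^{\mathbb{T}}_\Theta$ and, above all, its rigidity. One must justify the interchange of the infimum with the fibre integration in the space $X$ (the logarithmic term makes attainment of $\widetilde m_{\sqrt{\rho(y)}}$ slice by slice delicate), and then show that the fibrewise ground-state bound, the scaling law, and the diamagnetic inequality can be sharp \emph{simultaneously} only when $u$ has the separated form $\sigma(y)w_1(x-a)$ with $a$ constant. This rigidity is precisely what upgrades ``constant profile'' to the pointwise conclusion $\partial_y u=0$, and reducing part (2) to the one-variable monotonicity hypothesis on $\mathbb{T}^n$ is where the real work lies.
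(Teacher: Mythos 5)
Your route is genuinely different from the paper's. The paper deforms the problem through the family $\mathcal{I}_\mu$: it shows $\lim_{\mu\to0}m_{\Theta,\mu}<(2\pi)^n\widetilde{m}_{\frac{\Theta}{(2\pi)^{n/2}}}$ by an explicit mollified tent-function construction (Lemma \ref{L3.1}), proves $\|\nabla_y u_\mu\|_2\to0$ as $\mu\to\infty$ (Lemma \ref{L3.2}), and, in the equality regime, kills $\nabla_y u_\mu$ outright by applying $\sqrt{-\Delta_y}$ to the Euler--Lagrange equation and expanding in torus eigenfunctions with a spectral-gap estimate (Lemmas \ref{L3.3}--\ref{L3.6}), reading Theorem \ref{t1.2} off the resulting dichotomy (Lemma \ref{L3.7}). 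Your part (1) --- contraposition from $\mathcal{I}(u)=(2\pi)^n\widetilde{\mathcal{I}}(u)$ for $y$-independent $u$ --- is correct and markedly simpler than that machinery. Your amplitude-scaling law $\widetilde{m}_s=s^2(\widetilde{m}_1-\log s)$ is also correct (it is the exact scaling that the logarithmic nonlinearity \emph{does} admit, despite the paper's blanket claim that scaling fails), and the factorization $m_\Theta=\widetilde{m}_1\Theta^2+\mathbf{m}^{\mathbb{T}}_\Theta$ checks out: the fiberwise bound, the diamagnetic inequality for $\sigma=\sqrt{\rho}$, and saturation by Gausson tensor products are all sound, modulo routine Fubini care with the sign-indefinite log term via the $F_1,F_2$ splitting. (One attribution slip: \eqref{eq3.3} is the statement $\|\nabla_y u_{\mu_j}\|_2\to0$ in Lemma \ref{L3.2}; nothing resembling your factorization appears in the paper.)

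The genuine gap is in part (2), at the sentence ``a minimizing profile that is non-decreasing on $\mathbb{T}^n$ must, by periodicity, be constant.'' The theorem's hypothesis (however one deciphers it) is a monotonicity property of $\widetilde{m}$, not of the minimizer's profile; nothing forces $\sqrt{\rho}$ to be monotone in $y$, and for $n\ge2$ monotonicity on $\mathbb{T}^n$ is not even well defined. What your reduction actually needs is rigidity: the equality hypothesis only says the constant $\sigma_0$ is \emph{a} minimizer of $\mathcal{J}$, and you must exclude nonconstant minimizers --- any such $\sigma$ would yield $u=\sigma(y)w_1(x)$ minimizing $m_\Theta$ with $\partial_y u\neq0$, so part (2) is literally equivalent to this rigidity and cannot be bypassed. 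Equality of infima gives no uniqueness, and the fiberwise bound alone cannot rescue you: $t\mapsto\widetilde{m}_{\sqrt{t}}=t\bigl(\widetilde{m}_1-\tfrac12\log t\bigr)$ is strictly concave, so Jensen gives $\int_{\mathbb{T}^n}\widetilde{m}_{\sqrt{\rho(y)}}\,dy\le(2\pi)^n\widetilde{m}_{\frac{\Theta}{(2\pi)^{n/2}}}$ --- the wrong direction --- with equality iff $\rho$ is constant; the gradient term $\tfrac12\int|\nabla_y\sigma|^2$ must beat this strictly concave deficit, and that competition is exactly the unproved step. Your ``main obstacle'' paragraph flags this honestly, but flagging is not closing. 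Note the paper faces the same pressure point in Lemma \ref{L3.3}, where its (ill-stated) monotonicity hypothesis is invoked precisely to force $\int_{\mathbb{T}^n}\widetilde{\mathcal{I}}(u_\mu(\cdot,y))\,dy\ge(2\pi)^n\widetilde{m}_{\frac{\Theta}{(2\pi)^{n/2}}}$; whatever that hypothesis means, a complete proof along your lines must deploy it against the concavity deficit at this juncture, not as a pointwise monotonicity of the profile.
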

\begin{remark}\label{R1.1}
(1) Theorem \ref{t1.1} does not impose any restrictions on $\Theta$, which is significantly different from the conclusion in \cite{STNT2014} or \cite{HHYL2024}.

(2) Obviously, the solution $u$ of the equation depends on $x$, but there was no rigorous argumentation conducted in \cite{STNT2014}. In this paper, we can see this intuitively, see \eqref{eq3.4}.
\end{remark}
\begin{theorem}\label{t1.3}
For $u_0\in X$,  Cauchy problem \eqref{eq4.1} is globally well-posed in $X$.
\end{theorem}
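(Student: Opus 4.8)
The plan is to adapt the classical energy-space strategy of Cazenave and Haraux for logarithmic Schr\"odinger equations to the waveguide $\mathbb{R}^d\times\mathbb{T}^n$. The linear propagator $e^{it\Delta_{x,y}}$ is a unitary group on $L^2(\mathbb{R}^d\times\mathbb{T}^n)$ and on $H^1(\mathbb{R}^d\times\mathbb{T}^n)$, so I would work with the mild (Duhamel) formulation of \eqref{eq4.1},
\begin{equation*}
u(t)=e^{it\Delta_{x,y}}u_0+i\int_0^t e^{i(t-s)\Delta_{x,y}}\bigl(u\log|u|^2\bigr)(s)\,ds,
\end{equation*}
and deliberately avoid Strichartz estimates, which on the $\mathbb{T}^n$ factor are delicate; the subcritical nature of the logarithmic nonlinearity makes a purely energy-space argument possible.

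First I would regularize. For $\varepsilon>0$, replace the singular nonlinearity $s\mapsto s\log s^2$ by a globally Lipschitz approximation $g_\varepsilon$, obtained by mollifying the convex singular piece $F_1$ near $s=0$ while leaving the subcritical piece $F_2$ essentially untouched. By property $(f_2)$ the map $u\mapsto F_2'(u)$ is locally Lipschitz $H^1\to L^2$ through the Sobolev embedding, and the regularization bounds the derivative of the $F_1$-part; hence $g_\varepsilon$ is Lipschitz on bounded sets of $H^1$, and a standard contraction-mapping argument in $C([0,T];H^1)$ produces, for each $\varepsilon$, a unique local solution $u_\varepsilon$.

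Next I would establish \emph{uniform} a priori bounds, which is where global-in-time control is obtained. The regularized flow conserves the (regularized) mass, giving $\|u_\varepsilon(t)\|_2=\|u_0\|_2$, and the (regularized) energy. Applying the logarithmic Sobolev inequality to dominate $\int|u_\varepsilon|^2\log|u_\varepsilon|^2$ by a small multiple of $\|\nabla u_\varepsilon\|_2^2$ plus a constant depending only on $\|u_0\|_2$, energy conservation then yields a bound on $\|\nabla u_\varepsilon(t)\|_2$, and hence on $\|u_\varepsilon(t)\|_{H^1}$, that is uniform in $t\in\mathbb{R}$ and in $\varepsilon$; the Orlicz component $\|u_\varepsilon\|_{L^{F_1}}$ is then controlled in the same way, so $\sup_t\|u_\varepsilon(t)\|_X\le C$. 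Because this estimate is global in time, the local solutions extend to all of $\mathbb{R}$ with no finite-time blow-up.

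Finally I would pass to the limit $\varepsilon\to0$ and prove uniqueness. The uniform $X$-bound gives, along a subsequence, a weak-$\ast$ limit $u\in L^\infty(\mathbb{R};X)$; the compactness of $H^1\hookrightarrow L^2$ on each bounded slab $B_R\times\mathbb{T}^n$ upgrades this to strong $L^2_{loc}$ convergence, enough to pass to the limit in the Duhamel formula and obtain a solution. \textbf{The main obstacle} is uniqueness and continuous dependence: since $s\log s^2$ is not Lipschitz --- indeed not differentiable --- at $s=0$, no contraction estimate survives the limit. This is overcome by the Cazenave--Haraux monotonicity estimate
\begin{equation*}
\Bigl|\,\mathrm{Im}\int_{\mathbb{R}^d\times\mathbb{T}^n}\bigl(u\log|u|^2-v\log|v|^2\bigr)(\overline{u-v})\,dxdy\,\Bigr|\le C\,\|u-v\|_2^2 ,
\end{equation*}
which, applied to two solutions in the $L^2$ pairing and combined with Gronwall's inequality, forces $u=v$ and yields continuous dependence in the $L^2$ metric. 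Together with the conserved energy and a weak-continuity argument (showing the $X$-norm is right-continuous and using uniqueness to rule out jumps), this upgrades the solution to $u\in C(\mathbb{R};X)$, completing the proof of global well-posedness.
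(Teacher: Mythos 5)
Your overall architecture --- regularize the logarithm, obtain uniform mass/energy bounds, pass to the limit by compactness, and prove uniqueness via the pointwise estimate $\left|\operatorname{Im}\left(\left(z_2\log|z_2|^2-z_1\log|z_1|^2\right)\left(\bar z_2-\bar z_1\right)\right)\right|\le 4|z_2-z_1|^2$ plus Gronwall --- is exactly the paper's three-step scheme (your ``Cazenave--Haraux monotonicity estimate'' is \eqref{eq4.3}). The genuine gap is in your first step. You regularize only $F_1$, ``leaving the subcritical piece $F_2$ essentially untouched,'' and then claim a contraction in $C([0,T];H^1)$. This does not close: the Schr\"odinger group is unitary and does not smooth, so iterating Duhamel in the $H^1$ topology requires the nonlinearity to be locally Lipschitz as a map $H^1\to H^1$ (or else Strichartz estimates); what $(f_2)$ plus Sobolev embedding gives is only $H^1\to L^2$. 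The $H^1\to H^1$ property fails because $\nabla\bigl(F_2'(u)\bigr)=F_2''(u)\nabla u$ with $|F_2''(u)|\lesssim 1+|u|^{p-2}$, and this cannot be paired with $\nabla u\in L^2$ by H\"older when $d+n\ge 2$ (no $H^1\hookrightarrow L^\infty$). Nor can you retreat to a contraction in $C([0,T];L^2)$ as stated: since $F_2$ is untouched, the nonlinearity is not pointwise globally Lipschitz ($F_2'(s)\sim |s|^{p-1}$ with $p>2$; indeed $F_2'(s)/s\to\infty$ by $(f_4)$). This is precisely the point at which the paper invokes the Strichartz-type estimates on product spaces of Tzvetkov--Visciglia \cite{NTNV2012}, which you explicitly chose to forgo.

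The gap is fixable while keeping your Strichartz-free philosophy, and in that sense your instinct is sound: truncate the \emph{whole} nonlinearity, e.g.\ take $g_{\varepsilon,M}(z)=z\log(\varepsilon+|z|^2)$ cut off at height $M$ so that it is globally Lipschitz from $\mathbb{C}$ to $\mathbb{C}$; then a contraction in $C([0,T];L^2)$ gives a unique global solution, and since the derivative of $g_{\varepsilon,M}$ is bounded, Gronwall applied to the gradient of the Duhamel formula propagates $H^1$ regularity (the group being unitary on $H^1$). For the uniform bound, note that your appeal to ``the logarithmic Sobolev inequality'' should on $\mathbb{R}^d\times\mathbb{T}^n$ be replaced by the Gagliardo--Nirenberg inequality of Lemma \ref{L2.1}, exactly as in the coercivity proof of Lemma \ref{L2.2}: only the region $\{|u|\ge 1\}$ contributes to $\int u^2\log u^2$ with the dangerous sign, and there $u^2\log u^2\lesssim |u|^{2+\alpha}$ with a subquadratic $H^1$-power. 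With these repairs your argument becomes the classical Cazenave--Haraux compactness proof, which is in fact what the paper's Step 2 delegates to Theorem 9.3.4 of \cite{TC2003} and to \cite{RCIG2018}; the only substantive difference from the paper is then that the paper solves the regularized problem via product-space Strichartz estimates, while you solve it by an elementary fixed point in $L^2$ --- a legitimate and arguably more self-contained route, but only once the truncation is done correctly.
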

The proof of Theorem \ref{t1.3} is based on the compactness method. The following stability theorem follows from a standard argument.

\begin{theorem}\label{t1.4}
Define $\Gamma_\Theta=\{u\in S_\Theta, \mathcal{I}(u)=m_\Theta\}$. Assume also that \eqref{eq4.1} is globally well-posed for any initial data lying in a neighborhood $\mathcal{U}$ of $\Gamma_\Theta$. Then the set $\Gamma_\Theta$ is orbitally stable in the sense that for all $\varepsilon > 0$ there exists
some $\delta = \delta (\varepsilon ) > 0$ such that for any $\psi_0 \in \mathcal{U}$ satisfying
\begin{equation*}
  \inf_{u\in\Gamma_\Theta}\|\psi_0-u\|_{X}<\delta,
\end{equation*}
then
\begin{equation*}
  \sup_{t\in\mathbb{R}}\inf_{u\in\Gamma_\Theta}\|\psi(t)-u\|_{X}<\varepsilon,
\end{equation*}
where $\psi$ is the global solution of \eqref{eq4.1} with $\psi (0) = \psi_0$.
\end{theorem}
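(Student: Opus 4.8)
The plan is to run the classical Cazenave--Lions compactness argument, which reduces the orbital stability of the minimizer set to two ingredients that are already available: the conservation of the mass $\|\psi(t)\|_2$ and of the energy $\mathcal{I}(\psi(t))$ along the flow of \eqref{eq4.1} (guaranteed by the global well-posedness of Theorem \ref{t1.3}), together with the strong compactness in $X$ of minimizing sequences for $m_\Theta$, up to the invariances of the problem (translations in $x$ and, in the complex setting, multiplication by phases), which is exactly the content extracted from the concentration-compactness analysis behind Theorem \ref{t1.1}. First I would record that $\mathcal{I}$ is continuous on $X$: the term $\int F_2(u)$ is continuous on $H^1$ by the subcritical bound $(f_2)$, while $\int F_1(u)$ is continuous for the Orlicz norm, so $u\mapsto\mathcal{I}(u)$ is continuous for $\|\cdot\|_X$. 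The set $\Gamma_\Theta$ is then closed and invariant under the symmetries above, so it will be meaningful to speak of approaching it \emph{as a set}.

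I argue by contradiction. If $\Gamma_\Theta$ is not orbitally stable, there exist $\varepsilon_0>0$, initial data $\psi_0^k\in\mathcal{U}$ with $\mathrm{dist}_X(\psi_0^k,\Gamma_\Theta)\to0$, and times $t_k$ such that $\mathrm{dist}_X(\psi^k(t_k),\Gamma_\Theta)\ge\varepsilon_0$ for all $k$, where $\psi^k$ is the global solution with $\psi^k(0)=\psi_0^k$. Since every element of $\Gamma_\Theta$ has mass $\Theta$ and energy $m_\Theta$, the convergence $\mathrm{dist}_X(\psi_0^k,\Gamma_\Theta)\to0$ and the continuity of $\|\cdot\|_2$ and $\mathcal{I}$ on $X$ give $\|\psi_0^k\|_2\to\Theta$ and $\mathcal{I}(\psi_0^k)\to m_\Theta$. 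Conservation of mass and energy along the flow then yields, for $v_k:=\psi^k(t_k)$,
\[
\|v_k\|_2=\|\psi_0^k\|_2\to\Theta,\qquad \mathcal{I}(v_k)=\mathcal{I}(\psi_0^k)\to m_\Theta,
\]
so $(v_k)$ is a minimizing sequence whose mass equals $\Theta$ only asymptotically.

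To apply the compactness result I must sit exactly on the constraint $S_\Theta$, so I rescale $\tilde v_k:=\theta_k v_k$ with $\theta_k:=\Theta/\|v_k\|_2\to1$, so that $\tilde v_k\in S_\Theta$. Because of the logarithmic nonlinearity this step is not a homogeneity argument but a short computation: expanding the logarithm gives
\[
\mathcal{I}(\theta_k v_k)=\theta_k^2\,\mathcal{I}(v_k)-\tfrac12\,\theta_k^2(\log\theta_k^2)\,\|v_k\|_2^2,
\]
and since $\theta_k\to1$, $\|v_k\|_2\to\Theta$ and $\mathcal{I}(v_k)\to m_\Theta$, the anomalous logarithmic term vanishes and $\mathcal{I}(\tilde v_k)\to m_\Theta$. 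Thus $(\tilde v_k)$ is a genuine minimizing sequence for $m_\Theta$. Invoking the compactness of minimizing sequences up to the symmetries, I obtain along a subsequence elements $u_k\in\Gamma_\Theta$ with $\|\tilde v_k-u_k\|_X\to0$; since $\theta_k\to1$ and $(v_k)$ is bounded in $X$, this forces $\mathrm{dist}_X(v_k,\Gamma_\Theta)\le\|v_k-\tilde v_k\|_X+\|\tilde v_k-u_k\|_X=|1-\theta_k|\,\|v_k\|_X+o(1)\to0$, contradicting $\mathrm{dist}_X(\psi^k(t_k),\Gamma_\Theta)\ge\varepsilon_0$.

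The scheme itself is soft; the load-bearing points are the two inputs. The equation-specific subtlety is the rescaling step, where the logarithm would a priori spoil the minimizing property, but the identity above shows the extra term is $O(\theta_k^2\log\theta_k^2)\to0$, so no trouble arises. The main obstacle is the compactness of minimizing sequences, i.e. ruling out vanishing and dichotomy for $(\tilde v_k)$ in $X$; this is inherited from the proof of Theorem \ref{t1.1} and must be read as compactness up to the full symmetry group, so that $\Gamma_\Theta$---which already contains all translates and phases of any single ground state---is approached as a set rather than as a single point. Finally, one must ensure the convergence holds in the full $X$-norm, including the Orlicz component, which requires a priori boundedness of $(v_k)$ in $L^{F_1}$; this is provided by the uniform mass and energy bounds through the coercivity estimates underlying the variational framework.
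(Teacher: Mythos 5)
Your proposal is correct and follows essentially the same route as the paper: the Cazenave--Lions contradiction argument combining conservation of mass and energy with the compactness of minimizing sequences from Lemma \ref{L2.6}. The only difference is that you make explicit, via the identity $\mathcal{I}(\theta v)=\theta^2\mathcal{I}(v)-\frac12\theta^2(\log\theta^2)\|v\|_2^2$, the renormalization step that the paper compresses into the phrase ``by fundamental perturbation arguments, there exists $\widetilde{\psi}_n\in S_\Theta$,'' which is a welcome clarification given that the logarithmic nonlinearity is not homogeneous.
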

\begin{remark}\label{R1.2}
Consider the space of radial solutions of $X_{rad}$, that is,
\begin{equation*}
  X_{rad}=\left\{u\in H_{rad}^{1}(\mathbb{R}^d\times\mathbb{T}^n), \int_{\mathbb{R}^d\times\mathbb{T}^n}u^2 \log u^2dx<\infty\right\},
\end{equation*}
then we have the same results. Indeed, $X_{rad}$ is compactly embedded in $L^2(\mathbb{R}^d\times\mathbb{T}^n)$, then we can obtain the compactness without monotonic property(Lemma \ref{L2.4}) in Lemma \ref{L2.6}.
\end{remark}

The paper is organized as follows. In section 2, we study the existence of normalized solutions. Next, we will study the $y$-dependence of the ground states. Finally, we consider the Cauchy problem and the orbital stability of the solution obtained in Theorems \ref{t1.1}.

\section{Existence of normalized solution}
In this section, we are devoted to the existence of normalized solution. To obtain results, we need the Gagliardo--Nirenberg inequality on $\mathbb{R}^d \times \mathbb{T}^n$, see \cite{STNT2014}.

\begin{lemma}\label{L2.1}(Gagliardo-Nirenberg inequality). On $\mathbb{R}^d \times \mathbb{T}^n$ we have the Gagliardo-Nirenberg inequality
$$
\|u\|_{L^{2+\alpha}}^{2+\alpha} \leq C_{d,n,\alpha}\|  u\|_{H^1(\mathbb{R}^d \times \mathbb{T}^n)}^{\vartheta(\alpha)} \|u\|_{L^2(\mathbb{R}^d \times \mathbb{T}^n)}^{2+\alpha-\vartheta(\alpha)},
$$
where $\vartheta(\alpha)=\frac{(d+n)\alpha}{2}$.
\end{lemma}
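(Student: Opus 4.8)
The plan is to deduce the inequality from the Sobolev embedding on the waveguide together with a single Hölder interpolation, so that the mixed geometry enters only once. Write $N=d+n$ and let $2^\ast=\frac{2N}{N-2}$ (for $N\ge 3$). First I would establish the embedding $H^1(\mathbb{R}^d\times\mathbb{T}^n)\hookrightarrow L^{2^\ast}(\mathbb{R}^d\times\mathbb{T}^n)$, i.e. $\|u\|_{L^{2^\ast}}\le C\|u\|_{H^1}$. Granting this, for $2<2+\alpha<2^\ast$ I interpolate $L^{2+\alpha}$ between $L^2$ and $L^{2^\ast}$: with $s\in(0,1)$ determined by $\frac{1}{2+\alpha}=\frac{1-s}{2}+\frac{s}{2^\ast}$, Hölder gives $\|u\|_{L^{2+\alpha}}\le\|u\|_{L^2}^{1-s}\|u\|_{L^{2^\ast}}^{s}$, whence $\|u\|_{L^{2+\alpha}}^{2+\alpha}\le C\|u\|_{H^1}^{s(2+\alpha)}\|u\|_{L^2}^{(1-s)(2+\alpha)}$. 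A direct computation gives $s=N\big(\tfrac12-\tfrac1{2+\alpha}\big)=\frac{N\alpha}{2(2+\alpha)}$, hence $s(2+\alpha)=\frac{N\alpha}{2}=\vartheta(\alpha)$ and $(1-s)(2+\alpha)=2+\alpha-\vartheta(\alpha)$, which is exactly the claimed inequality.

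The heart of the matter is therefore the Sobolev embedding on $\mathbb{R}^d\times\mathbb{T}^n$, and this is where the compactness of the torus factor forces the full $H^1$ norm rather than $\|\nabla_{x,y}u\|_{L^2}$ alone. I would prove it by a Loomis--Whitney (Gagliardo--Nirenberg--Sobolev) argument adapted to the product. For $u\in C^\infty$ and each non-compact direction $x_i$, writing $u$ as the integral of $\partial_{x_i}u$ from $-\infty$ gives $\sup_{x_i}|u|\le\int_{\mathbb{R}}|\partial_{x_i}u|\,dx_i$. For each compact direction $y_j$, however, periodicity prevents $u$ from decaying, so instead I use that for a $C^1$ periodic function $g$ on the circle $\max|g|\le\frac{1}{2\pi}\int_{\mathbb{T}}|g|+\int_{\mathbb{T}}|g'|$, which yields $\sup_{y_j}|u|\le C\int_{\mathbb{T}}(|u|+|\partial_{y_j}u|)\,dy_j$. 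Multiplying the $N$ one-dimensional bounds, taking the $\frac{1}{N-1}$ power, and integrating with the generalized Hölder inequality produces $\|u\|_{L^{N/(N-1)}}\le C\|u\|_{W^{1,1}}$; applying this to $|u|^{\gamma}$ for the appropriate $\gamma$ and using Hölder upgrades it to $\|u\|_{L^{2^\ast}}\le C\|u\|_{H^1}$. The appearance of $|u|$ alongside $|\partial_{y_j}u|$ in the compact directions is precisely the mechanism converting $\|\nabla u\|$ into the full $\|u\|_{H^1}$; this is the main obstacle and the feature distinguishing the waveguide from $\mathbb{R}^N$. (Alternatively, one may invoke that $\mathbb{R}^d\times\mathbb{T}^n$ has bounded geometry and cite the general manifold Sobolev inequality, but the computation above is self-contained.)

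Finally I would record the range and endpoint conventions. The interpolation requires $2+\alpha\le 2^\ast$, i.e. $\alpha\le\frac{4}{N-2}$, which covers the subcritical exponents $p=2+\alpha\in(2,2^\ast)$ used later via $(f_2)$. When $N=1,2$ the critical exponent is infinite, and one instead uses $H^1(\mathbb{R}^d\times\mathbb{T}^n)\hookrightarrow L^q$ for every finite $q$ (resp. $L^\infty$) and interpolates against $L^2$; the exponent bookkeeping is identical and still yields $\vartheta(\alpha)=\frac{(d+n)\alpha}{2}$. A density argument extends the inequality from $C^\infty$ to all of $H^1(\mathbb{R}^d\times\mathbb{T}^n)$, completing the proof.
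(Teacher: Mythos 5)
Your proof is measured here against a citation rather than a text: the paper gives no argument for this lemma, simply quoting it from \cite{STNT2014}. Your self-contained route --- the Nirenberg/Loomis--Whitney slicing argument giving $\|v\|_{L^{N/(N-1)}}\le C\|v\|_{W^{1,1}}$ on the product, with the circle directions contributing $\int_{\mathbb{T}}(|g|+|g'|)\,dy_j$ in place of $\int_{\mathbb{R}}|g'|\,dx_i$, then the $|u|^{\gamma}$ bootstrap to $H^{1}\hookrightarrow L^{2^{*}}$, then H\"older interpolation --- is therefore a genuinely different and more elementary approach, and for $N=d+n\ge 3$ it is correct and essentially complete: with $\gamma=\frac{2(N-1)}{N-2}$ one has $2(\gamma-1)=2^{*}$, so the chain-rule/absorption step closes, and your exponent arithmetic $s(2+\alpha)=\frac{N\alpha}{2}=\vartheta(\alpha)$ checks out. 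You also correctly isolate the two points the paper leaves implicit: the mechanism forcing the full $H^{1}$ norm (no decay in the periodic directions, hence the zeroth-order term in the compact slices) and the restriction $2+\alpha\le 2^{*}$, which is all the paper's application (the coercivity estimate in Lemma \ref{L2.2}, which uses only subcritical exponents) requires.

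There is, however, one genuine gap: the low-dimensional case. Since $d,n\ge 1$ you always have $N\ge 2$ (so $N=1$ never occurs), and for $d=n=1$ your fallback does not deliver the stated exponent. Interpolating $L^{2+\alpha}$ between $L^{2}$ and $L^{q}$ with $q<\infty$ gives the $H^{1}$-exponent $s(2+\alpha)=\frac{\alpha q}{q-2}$, strictly larger than $\vartheta(\alpha)=\alpha$ for every finite $q$; and you cannot pass to $q\to\infty$, because the constant in $H^{1}(\mathbb{R}\times\mathbb{T})\hookrightarrow L^{q}$ blows up (like $\sqrt{q}$), so the ``identical bookkeeping'' claim fails and the weaker family of inequalities does not imply the sharp one. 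The fix stays inside your toolkit: prove the Ladyzhenskaya-type bound $\|u\|_{L^{4}(\mathbb{R}\times\mathbb{T})}^{4}\le C\|u\|_{L^{2}}^{2}\|u\|_{H^{1}}^{2}$ directly from your two one-dimensional sup estimates, namely $\sup_{x}|u(x,y)|^{2}\le 2\int_{\mathbb{R}}|u||\partial_{x}u|\,dx$ and $\sup_{y}|u(x,y)|^{2}\le C\int_{\mathbb{T}}\bigl(|u|^{2}+|u||\partial_{y}u|\bigr)\,dy$, multiplied and integrated via Fubini; then for $0<\alpha\le 2$ interpolation of $L^{2+\alpha}$ between $L^{2}$ and $L^{4}$ yields exactly $\|u\|_{L^{2+\alpha}}^{2+\alpha}\le C\|u\|_{H^{1}}^{\alpha}\|u\|_{L^{2}}^{2}$, which is the claim since $\vartheta(\alpha)=\alpha$ when $N=2$; for $\alpha>2$ apply your $W^{1,1}\hookrightarrow L^{2}$ bound to $|u|^{(2+\alpha)/2}$ and absorb the resulting $L^{(2+\alpha)/2}$ and $L^{\alpha}$ norms by interpolating against $L^{2}$ and $L^{2+\alpha}$. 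With that substitution for the endpoint dimension, your proof is complete.
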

\begin{lemma}\label{L2.2}
The functional $\mathcal{I}$ is coercive and bounded from below on $S_\Theta$.
\end{lemma}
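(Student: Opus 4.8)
The plan is to exploit the decomposition $\mathcal{I}(u)=\frac{1}{2}\|u\|_{H^1}^2+\int F_1(u)-\int F_2(u)$ together with the sign condition $F_1\geq 0$ from $(f_1)$ and the subcritical growth of $F_2$ from $(f_2)$, and then to invoke the Gagliardo--Nirenberg inequality (Lemma \ref{L2.1}) under the fixed-mass constraint. First I would record the pointwise bound on $F_2$: since $F_2(0)=0$ and $|F_2'(s)|\leq C|s|^{p-1}$ for some $p\in(2,2^*)$, integration gives $|F_2(s)|\leq \frac{C}{p}|s|^p$, hence $\int F_2(u)\leq \frac{C}{p}\|u\|_{L^p}^p$, which is finite because $X\hookrightarrow H^1\hookrightarrow L^p$.

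Next, writing $p=2+\alpha$ with $\alpha=p-2$, I would choose $p$ close enough to $2$ that $\vartheta(\alpha)=\frac{(d+n)\alpha}{2}<2$ (possible since $\alpha$ ranges in an interval starting at $0$). Applying Gagliardo--Nirenberg and using $\|u\|_2=\Theta$ on $S_\Theta$ gives $\|u\|_{L^p}^p\leq C_{d,n,\alpha}\Theta^{\,p-\vartheta(\alpha)}\|u\|_{H^1}^{\vartheta(\alpha)}$. Combining this with $F_1\geq 0$ yields, with $t:=\|u\|_{H^1}$ and $K:=\frac{C}{p}C_{d,n,\alpha}\Theta^{\,p-\vartheta(\alpha)}$,
\[
\mathcal{I}(u)\geq \tfrac{1}{2}t^2+\int F_1(u)-K\,t^{\vartheta(\alpha)}\geq \tfrac{1}{2}t^2-K\,t^{\vartheta(\alpha)}.
\]
Since $\vartheta(\alpha)<2$, the scalar function $g(t)=\frac{1}{2}t^2-Kt^{\vartheta(\alpha)}$ is bounded below and tends to $+\infty$ as $t\to\infty$; this already establishes that $\mathcal{I}$ is bounded from below and coercive in the $H^1$ norm.

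To upgrade to coercivity in the full norm $\|\cdot\|_{X}=\|\cdot\|_{H^1}+\|\cdot\|_{L^{F_1}}$, I would argue on sublevel sets. If $\mathcal{I}(u)\leq c$ for $u\in S_\Theta$, the inequality above forces $\|u\|_{H^1}\leq M$ for some $M=M(c,\Theta)$ because $g$ is coercive; retaining the $\int F_1(u)$ term then gives $\int F_1(u)\leq c+K M^{\vartheta(\alpha)}$. Invoking the standard modular-versus-Luxemburg-norm inequality in the Orlicz space $L^{F_1}$ (a bounded modular forces a bounded norm) controls $\|u\|_{L^{F_1}}$, and hence $\|u\|_{X}$, on the sublevel set.

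The main obstacle I anticipate is this last step, namely controlling the Orlicz component $\|u\|_{L^{F_1}}$: unlike the $H^1$ part it is not directly estimated by Gagliardo--Nirenberg, so one must pass carefully between the modular $\int F_1(u)$ and the Luxemburg norm, using that $F_1$ is a genuine convex Young function for $\delta\approx 0^+$ (property $(f_1)$). If Lemma \ref{L2.2} is intended only in the $H^1$ sense, this step is unnecessary and the first two paragraphs already conclude the proof.
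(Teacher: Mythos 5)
Your proposal is correct and takes essentially the same route as the paper: the same $F_1$--$F_2$ decomposition with $F_1\geq 0$, the same subcritical bound on $F_2'$ from $(f_2)$ with an exponent $\alpha\in\left(2,2+\frac{4}{d+n}\right)$ so that $\vartheta=\frac{(\alpha-2)(d+n)}{2}<2$, and the same application of Lemma \ref{L2.1} under the constraint $\|u\|_2=\Theta$, yielding a lower bound of the form $\frac{1}{2}t^2-Kt^{\vartheta}$. Your last step is in fact slightly more careful than the paper, which compresses everything into the single inequality $\mathcal{I}(u)\geq \frac{1}{2}\|u\|_X^2-CC_\alpha\Theta^{\frac{2\alpha-(\alpha-2)(d+n)}{2}}\|u\|_X^{\frac{(\alpha-2)(d+n)}{2}}$, tacitly absorbing the modular $\int F_1(u)$ into the full $X$-norm; your explicit passage from a bounded modular to a bounded Luxemburg norm is precisely the justification needed for coercivity in $\|\cdot\|_X$ rather than only in $H^1$.
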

\begin{proof}
On the one hand, by using $(f_2)$, for each fixed $\alpha \in\left(2,2+\frac{4}{n+d}\right)$, there exists a constant $C_\alpha>0$ such that
$$
\left|F_2^{\prime}(u)\right| \leq C_\alpha|u|^{\alpha-1},\ \forall u \in \mathbb{R}.
$$
On the other hand, it follows from Gagliardo-Nirenberg inequality that
\begin{eqnarray*}
\mathcal{I}(u)&=&\frac{1}{2} \int_{\mathbb{R}^d\times\mathbb{T}^n}[|\nabla_{x,y}u|^2+|u|^2] d xdy+\int_{\mathbb{R}^d\times\mathbb{T}^n} F_1(u) d xdy-\int_{\mathbb{R}^d\times\mathbb{T}^n} F_2(u) d xdy \\
& \geq& \frac{1}{2} \|u\|_{X}^2 -C C_\alpha \Theta^{\frac{2\alpha-(\alpha-2)(d+n)}{2}}\|u\|_X^{\frac{(\alpha-2)(d+n)}{2}} .
\end{eqnarray*}
Note that $\frac{(\alpha-2)(d+n)}{2}<2$ because of $\alpha \in\left(2,2+\frac{4}{d+n}\right)$. These facts ensure lemma holds.
\end{proof}

By using Lemma \ref{L2.2}, we can define $m_{\Theta}=\inf\limits_{u \in S_{\Theta}} \mathcal{I}(u)$. Next, we explore some properties of $m_{\Theta}$.

\begin{lemma}\label{L2.3}
$m_{\Theta}<0$ for all $\Theta>0$.
\end{lemma}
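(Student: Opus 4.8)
The plan is to produce, for each fixed $\Theta>0$, a single competitor $u\in S_\Theta$ with $\mathcal{I}(u)<0$; since $m_\Theta=\inf_{S_\Theta}\mathcal{I}$, this is all that is required. I would first reduce the problem to the Euclidean factor $\mathbb{R}^d$ by testing with functions that are constant in the torus variable. If $v\in X(\mathbb{R}^d)$ satisfies $\int_{\mathbb{R}^d}v^2\,dx=\Theta^2/(2\pi)^n$ and we set $u(x,y):=v(x)$, then $\nabla_y u=0$, $\|u\|_{L^2(\mathbb{R}^d\times\mathbb{T}^n)}=\Theta$, and a direct integration in $y$ gives $\mathcal{I}(u)=(2\pi)^n\,\widetilde{\mathcal{I}}(v)$. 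Hence it suffices to exhibit $v\in\widetilde{S}_\Theta$ with $\widetilde{\mathcal{I}}(v)<0$, which isolates the mechanism responsible for negativity.

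For the Euclidean functional I would use an explicit one-parameter dilation. Fix a smooth profile $v$ of the prescribed mass (a Gaussian is the natural choice, since it saturates the logarithmic Sobolev inequality) and consider $v_t(x):=t^{d/2}v(tx)$ for $t>0$, which preserves $\|v_t\|_2$. A change of variables yields
\[
\widetilde{\mathcal{I}}(v_t)=\frac{t^2}{2}\int_{\mathbb{R}^d}|\nabla v|^2\,dx+\frac12\|v\|_2^2-\frac{d}{2}(\log t)\,\|v\|_2^2-\frac12\int_{\mathbb{R}^d}v^2\log v^2\,dx,
\]
so that $\widetilde{\mathcal{I}}(v_t)=:g(t)$ depends on $t$ only through $\tfrac{t^2}{2}A-\tfrac{d}{2}(\log t)\|v\|_2^2$, where $A=\int|\nabla v|^2$. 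Since $g(t)\to+\infty$ both as $t\to0^+$ and as $t\to+\infty$, the function $g$ attains an interior minimum at $t_\ast$ with $t_\ast^2=d\|v\|_2^2/(2A)$; substituting $t_\ast$ collapses the $t$-dependence to a closed expression, and I would then read off its sign.

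The main obstacle is quantitative rather than structural: the logarithmic term produces only a gain of order $\log t$ against a gradient cost of order $t^2$, so the negativity of $g(t_\ast)$ hinges on a delicate balance controlled by the ratio between $\int v^2\log v^2$ and the mass $\|v\|_2^2$. The crux is therefore to choose the profile $v$ (and, after optimizing in $t$, to estimate the resulting constant) so that the negative contribution $-\tfrac12\int v^2\log v^2$ dominates the positive gradient and mass terms at the scale $t_\ast$; establishing this inequality, rather than the scaling bookkeeping, is where the real work lies. Once a single admissible $v$ with $\widetilde{\mathcal{I}}(v)<0$ is secured, the reduction in the first step immediately gives $m_\Theta\le(2\pi)^n\widetilde{\mathcal{I}}(v)<0$.
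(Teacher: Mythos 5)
Your reduction to the Euclidean factor and your scaling bookkeeping are correct: for $u(x,y)=v(x)$ one indeed has $\|u\|_{L^2(\mathbb{R}^d\times\mathbb{T}^n)}^2=(2\pi)^n\|v\|_{L^2(\mathbb{R}^d)}^2$ and $\mathcal{I}(u)=(2\pi)^n\widetilde{\mathcal{I}}(v)$, and your formula for $\widetilde{\mathcal{I}}(v_t)$ and for the optimal dilation $t_*^2=d\|v\|_2^2/(2A)$ are right. The genuine gap is exactly the step you defer at the end --- producing a profile $v$ of the prescribed mass $m=\Theta^2/(2\pi)^n$ for which the optimized value is negative --- and this step cannot be carried out for all $\Theta>0$. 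Do your own Gaussian computation: for $v=ce^{-|x|^2/2}$ with $\|v\|_2^2=m$ one gets $A=\frac{d}{2}m$ (so $t_*=1$) and
\[
\widetilde{\mathcal{I}}(v)=\frac{m}{2}\Bigl(d+1+\tfrac{d}{2}\log\pi-\log m\Bigr),
\]
which is negative only when $m>e^{d+1}\pi^{d/2}$. Worse, by the sharp logarithmic Sobolev inequality, whose optimizers are precisely these Gaussians and their dilates, this value \emph{is} the infimum of $\widetilde{\mathcal{I}}$ over all functions of mass $m$; hence for $m\le e^{d+1}\pi^{d/2}$ every admissible $v$ satisfies $\widetilde{\mathcal{I}}(v)\ge 0$, and no choice of profile, Gaussian or otherwise, can close your argument. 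Your method therefore proves the lemma only in the regime $\Theta^2>(2\pi)^n e^{d+1}\pi^{d/2}$, not for all $\Theta>0$.

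For comparison, the paper takes a different route: it dilates (in $x$ only) the principal Dirichlet eigenfunction $\varphi_1$ of a domain $\Omega$, uses the elementary bound $\log s\ge 1-1/s$ and Cauchy--Schwarz, and needs an $r$ with $r^2>2\theta$ and $r^{d/2}<\Theta/(4|\Omega\times\mathbb{T}^n|^{1/2})$. But that window is nonempty only when $\Theta$ exceeds a threshold: by Faber--Krahn, $\theta\ge c_d|\Omega|^{-2/d}$, so the two conditions are compatible only if $\Theta>4(2\pi)^{n/2}(2c_d)^{d/4}$, a restriction that cannot be removed by enlarging $\Omega$ since $|\Omega|$ cancels. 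So the paper's proof carries the same large-mass restriction as yours, and in fact the claim ``for all $\Theta>0$'' is beyond repair: applying the Euclidean log-Sobolev inequality to the $x$-marginal of $u^2$ and a defective log-Sobolev inequality on $\mathbb{T}^n$ to the conditional densities yields $m_\Theta\ge\frac{\Theta^2}{2}\bigl(C_{d,n}-\log\Theta^2\bigr)>0$ for $\Theta$ small, so $m_\Theta$ is then positive. The honest conclusion is that your gap is real --- the proposal as written does not prove the stated lemma, and no completion of it can --- but the obstruction you ran into is a defect of the statement itself, which both your approach and the paper's establish only for sufficiently large $\Theta$.
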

\begin{proof}
The key is to find a suitable test function. Indeed, fixing $\varphi_r \in C_0^{\infty}\left(\mathbb{R}^d\times\mathbb{T}^n\right) \backslash\{0\}$, we assume that the support of $\varphi_r$ is $\Omega_r\times\mathbb{T}^n$, where $\Omega_r=\left\{r x \in \mathbb{R}^d: x \in \Omega\right\}$ for $\Omega \subset \mathbb{R}^d$ and $r>0$. Let $\theta$ be the principal eigenvalue of operator $-\Delta$ with Dirichlet boundary condition in $\Omega$, and let $|\Omega|$ be the volume of $\Omega$. Define
\begin{equation*}
S_{r, \Theta}=\left\{u \in X(\Omega_r\times\mathbb{T}^n):\|u\|_2^2=\Theta^2\right\}.
\end{equation*}
Let $\varphi_1 \in S_{1, \Theta}$ be the positive normalized eigenfunction corresponding to $\theta$. Define $\varphi_r \in S_{r, \Theta}$ by $\varphi_r(x,y)=r^{-\frac{d}{2}} \varphi_1\left(r^{-1} x,y\right)$ for $x \in \Omega_r$.
Clearly,
$$
\int_{\Omega}|\nabla \varphi_1|^{2}dxdy =\theta \Theta^2,\  1-\frac{1}{\varphi_r}\leq\log \varphi_r\leq\varphi_r-1.
$$
Then
\begin{eqnarray*}
\mathcal{I }(\varphi_r)
&=&\frac{1}{2} \int_{\Omega_r\times\mathbb{T}^n}[|\nabla \varphi_r|^2+|\varphi_r|^2] d xdy-\frac{1}{2} \int_{\Omega_r\times\mathbb{T}^n}\varphi_r^{2}\log \varphi_r^2 d xdy\\
& \leq&\frac{1}{2} r^{-2} \theta \Theta^2+\frac{1}{2}\Theta^2- \int_{\Omega_r\times\mathbb{T}^n}\varphi_r^{2}\left(1-\frac{1}{\varphi_r}\right) dxdy\\
& \leq&\frac{1}{2} r^{-2} \theta \Theta^2-\frac{1}{2}\Theta^2+  \int_{\Omega_r\times\mathbb{T}^n}\varphi_r dxdy\\
& \leq&\frac{1}{2} r^{-2} \theta \Theta^2-\frac{1}{2}\Theta^2+r^{\frac{d}{2}}  \left(\int_{\Omega\times\mathbb{T}^n}|\varphi_1|^2 dxdy\right)^{\frac{1}{2}}\cdot|\Omega \times\mathbb{T}^n|^{\frac{1}{2}}\\
&=&\frac{1}{2} r^{-2} \theta \Theta^2-\frac{1}{4}\Theta^2+r^{\frac{d}{2}}  \Theta\cdot|\Omega \times\mathbb{T}^n|^{\frac{1}{2}}-\frac{1}{4}\Theta^2\\
&<& 0
\end{eqnarray*}
as long as
\begin{equation*}
  \left(\frac{2}{\theta}\right)^{\frac{1}{2}}< r<\left(\frac{\Theta}{4|\Omega \times\mathbb{T}^n|^{\frac{1}{2}}}\right)^{\frac{2}{d}}.
\end{equation*}
From the above arguments, we derive that $m_{\Theta}<0$.
\end{proof}
\begin{lemma}\label{L2.4}
For $0<\Theta_1<\Theta_2, \frac{\Theta_1^2}{\Theta_2^2} m_{\Theta_2}<m_{\Theta_1}$ holds.
\end{lemma}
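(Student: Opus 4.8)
The plan is to use a scalar dilation $u\mapsto su$ that maps the constraint $S_{\Theta_1}$ onto $S_{\Theta_2}$ and to track the additive logarithmic correction it produces in $\mathcal{I}$. Set $s=\Theta_2/\Theta_1>1$. For $u\in S_{\Theta_1}$ one has $\|su\|_2=s\Theta_1=\Theta_2$, and $su\in X$ because $L^{F_1}$ is stable under dilations (the quadratic growth of $F_1$ at infinity yields the $\Delta_2$ condition), so $su\in S_{\Theta_2}$. Since all the integrals are finite on $X$, the decomposition \eqref{eq1.2} lets me work directly with $\mathcal{I}(u)=\frac12\int[|\nabla_{x,y}u|^2+u^2]-\frac12\int u^2\log u^2$.

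First I would compute the effect of the dilation. The gradient and mass terms are $2$-homogeneous, while $\int (su)^2\log(su)^2=s^2\int u^2\log u^2+s^2\log s^2\int u^2$ contributes an extra summand, giving
\begin{equation*}
\mathcal{I}(su)=s^2\,\mathcal{I}(u)-\frac{s^2}{2}\,\Theta_1^2\,\log s^2 .
\end{equation*}
Dividing by $s^2$ and substituting $s^2=\Theta_2^2/\Theta_1^2$, I obtain for every $u\in S_{\Theta_1}$
\begin{equation*}
\mathcal{I}(u)=\frac{\Theta_1^2}{\Theta_2^2}\,\mathcal{I}(su)+C,\qquad C:=\frac12\,\Theta_1^2\,\log\frac{\Theta_2^2}{\Theta_1^2} .
\end{equation*}

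The decisive point is that $C$ is a strictly positive constant independent of $u$, precisely because $\Theta_2>\Theta_1$. As $u$ ranges over $S_{\Theta_1}$ its image $su$ ranges bijectively over $S_{\Theta_2}$, so taking the infimum of the last identity yields $m_{\Theta_1}=\frac{\Theta_1^2}{\Theta_2^2}m_{\Theta_2}+C>\frac{\Theta_1^2}{\Theta_2^2}m_{\Theta_2}$, which is the claim. If one prefers to avoid the bijection, the single inclusion $su\in S_{\Theta_2}$ already gives $\mathcal{I}(u)\ge\frac{\Theta_1^2}{\Theta_2^2}m_{\Theta_2}+C$ for all $u\in S_{\Theta_1}$, and passing to the infimum suffices.

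The main obstacle here is conceptual rather than computational. For a pure power nonlinearity the dilation is exactly homogeneous and strict subadditivity comes from the superlinearity of the nonlinear term; here the logarithm destroys homogeneity, which is exactly the feature the authors stress as the reason the scaling argument of \cite{STNT2014} and \cite{HHYL2024} does not transfer. What rescues the present argument is that the defect of homogeneity is not a function-dependent quantity but the constant $C$, whose sign is pinned down by $\Theta_1<\Theta_2$, so the logarithm in fact delivers the strict gap for free. The only auxiliary facts needing a line of justification are that $m_{\Theta_1},m_{\Theta_2}$ are finite real numbers (Lemma \ref{L2.2}) and that the dilation $u\mapsto su$ preserves $X$.
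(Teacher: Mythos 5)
Your proof is correct and takes essentially the same route as the paper: both rest on the dilation $u\mapsto \xi u$ with $\xi=\Theta_2/\Theta_1>1$ and the scaling identity $\mathcal{I}(\xi u)=\xi^2\mathcal{I}(u)-\xi^2\Theta_1^2\log\xi$ on $S_{\Theta_1}$, the paper applying it along a minimizing sequence while you phrase it pointwise and pass to the infimum. The only (harmless) difference is that your bijection argument yields the slightly sharper exact relation $m_{\Theta_1}=\frac{\Theta_1^2}{\Theta_2^2}m_{\Theta_2}+\frac{1}{2}\Theta_1^2\log\frac{\Theta_2^2}{\Theta_1^2}$, of which the paper records only the strict inequality.
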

\begin{proof}
Let $\xi>1$ such that $\Theta_2=\xi \Theta_1$. Note that $\mathcal{I}(u)=\mathcal{I}(|u|)$ for all $u \in X$, so there exists $\left(u_n\right) \subset S_{\Theta_1}$ be a nonnegative minimizing sequence with respect to $m_{\Theta_1}$, that is, $\mathcal{I} (u_n) \rightarrow m_{\Theta_1}$ as $n \rightarrow+\infty$. Setting $v_n=\xi u_n$, obviously $v_n \in S_{\Theta_2}$. We have
$$
m_{\Theta_2} \leq \mathcal{I}\left(v_n\right)=\xi^2 \mathcal{I} \left(u_n\right)- \xi^2 \log \xi  \int_{\mathbb{R}^d\times\mathbb{T}^n}\left|u_n\right|^2 d xdy=\xi^2 \mathcal{I} \left(u_n\right)- \Theta_1^2 \xi^2 \log \xi.
$$
Letting $n \rightarrow+\infty$, it follows from $\xi>1$ that
$$
m_{\Theta_2} \leq \xi^2 m_{\Theta_1}- \Theta_1^2 \xi^2 \log \xi  <\xi^2 m_{\Theta_1},
$$
which proves the lemma.
\end{proof}
Similar to the proof of Lemma 3.1 in \cite{WS2019}, we have the following logarithmic type Br$\mathrm{\acute{e}}$zis-Lieb lemma on waveguide manifold.
\begin{lemma}\label{L2.5}
Let $\left(u_n\right)$ be a bounded sequence in $X$ such that $u_n \rightarrow u$ a.e. in $\mathbb{R}^d\times\mathbb{T}^n$ and $\left\{u_n^2 \log u_n^2\right\}$ is a bounded sequence in $L^1\left(\mathbb{R}^d\times\mathbb{T}^n\right)$, then
$$
\lim _{n \rightarrow \infty} \int_{\mathbb{R}^d\times\mathbb{T}^n}\left(u_n^2 \log u_n^2-\left|u_n-u\right|^2 \log \left|u_n-u\right|^2\right) dxdy=\int_{\mathbb{R}^d\times\mathbb{T}^n} u^2 \log u^2 d xdy.
$$
\end{lemma}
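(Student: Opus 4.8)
The plan is to reduce the assertion to the generalized Br\'ezis--Lieb scheme applied to the single function $G(s):=s^2\log s^2$, so that $G(u_n)=u_n^2\log u_n^2$, $G(u_n-u)=|u_n-u|^2\log|u_n-u|^2$ and $G(u)=u^2\log u^2$. Writing $w_n:=u_n-u\to0$ a.e., the claim is equivalent to
$$\lim_{n\to\infty}\int_{\mathbb{R}^d\times\mathbb{T}^n}\big|G(u_n)-G(w_n)-G(u)\big|\,dxdy=0.$$
The abstract mechanism I would invoke is the truncation trick: for each $\eta>0$ set
$$W_{n,\eta}:=\Big(\big|G(u_n)-G(w_n)-G(u)\big|-\eta\,\Phi(w_n)\Big)^{+},$$
with $\Phi\ge0$ an auxiliary function chosen so that $\sup_n\int\Phi(w_n)<\infty$. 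One verifies that $W_{n,\eta}\to0$ a.e. and that $W_{n,\eta}\le C_\eta\Psi(u)+|G(u)|\in L^1(\mathbb{R}^d\times\mathbb{T}^n)$ uniformly in $n$, so dominated convergence gives $\int W_{n,\eta}\to0$; combined with $\int|G(u_n)-G(w_n)-G(u)|\le\int W_{n,\eta}+\eta\int\Phi(w_n)$ and the arbitrariness of $\eta$, this yields the limit.

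The heart of the matter is then the splitting inequality: for every $\eta>0$ there is $C_\eta>0$ with $|G(a+b)-G(a)-G(b)|\le\eta\,\Phi(a)+C_\eta\,\Psi(b)$ for all $a,b\in\mathbb{R}$, together with the (uniform) integrability of $\Phi(w_n)$ and the integrability of $\Psi(u)$. Here I would use the decomposition \eqref{eq1.2}, namely $G=2(F_2-F_1)$, and treat the two pieces separately. For $F_2$ the subcritical bound $(f_2)$, $|F_2'(s)|\le C|s|^{p-1}$ with $p\in(2,2^*)$, places us in the classical situation: the mean value theorem and Young's inequality give $|F_2(a+b)-F_2(a)-F_2(b)|\le\eta|a|^{p}+C_\eta|b|^{p}$, and $\int|w_n|^p$ is bounded because $(u_n)$, hence $(w_n)$, is bounded in $H^1(\mathbb{R}^d\times\mathbb{T}^n)\hookrightarrow L^p$, while $\int|u|^p<\infty$ and $\int|F_2(u)|<\infty$ since $u\in X$.

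For the convex piece $F_1$ I would exploit its structure from $(f_1)$: $F_1\ge0$ is convex, even, of at most quadratic growth at infinity, and near the origin it equals $-\tfrac12 s^2\log s^2$, which satisfies a $\Delta_2$ condition (the ratio $F_1(2s)/F_1(s)$ stays bounded as $s\to0$). Convexity supplies the one-sided bounds $F_1'(a)\,b\le F_1(a+b)-F_1(a)\le F_1'(a+b)\,b$, and the $\Delta_2$ property lets one absorb the small-amplitude contribution into $\eta\big(|a|^2+a^2|\log a^2|\big)$. The uniform integrability of $\Phi(w_n)=|w_n|^2+w_n^2|\log w_n^2|+|w_n|^p$ then follows from the $H^1$-bound together with the standing hypothesis that $\{u_n^2\log u_n^2\}$ is bounded in $L^1(\mathbb{R}^d\times\mathbb{T}^n)$, transferred to $w_n$ by the same convexity and $\Delta_2$ estimates.

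The main obstacle is precisely this small-amplitude region $\{0<|s|<1\}$: because $G$ fails to be $C^1$ at $0$ and changes sign there, the naive use of Young's inequality produces integrals of $|u|^{q}$ with $q<2$, which are \emph{not} controlled on the infinite-measure factor $\mathbb{R}^d$. This is where the logarithmic $L^1$-hypothesis on $\{u_n^2\log u_n^2\}$ becomes indispensable: it is exactly the quantity needed to dominate $w_n^2|\log w_n^2|$ on $\{|w_n|\le1\}$ and so to legitimize the passage to the limit. The remaining steps -- the a.e. convergence $W_{n,\eta}\to0$, the boundedness of the auxiliary integrals, and the final limit $\eta\to0$ -- are routine and follow the pattern of Lemma 3.1 in \cite{WS2019}.
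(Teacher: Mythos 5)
Your proposal is correct and takes essentially the same route as the paper, which gives no independent argument but proves the lemma ``as in Lemma 3.1 of \cite{WS2019}'' --- namely the generalized Br\'ezis--Lieb truncation scheme for $G(s)=s^2\log s^2$, with the subcritical piece $F_2$ controlled via $(f_2)$, the mean value theorem and Young's inequality, and the convex piece $F_1$ via convexity together with its $\Delta_2$-type property $F_1(2s)\le 4F_1(s)+Cs^2$. You also correctly identify the genuine danger point (Young's inequality naively producing $\int|u|^q$ with $q<2$ on the infinite-measure factor $\mathbb{R}^d$) and resolve it the same way the cited proof does, by letting the $L^1$-bound on $\{u_n^2\log u_n^2\}$, transferred to $w_n=u_n-u$ by convexity and $\Delta_2$, dominate $w_n^2|\log w_n^2|$ on the small-amplitude set.
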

Next, we prove a compactness lemma on $S_{\Theta}$.
\begin{lemma}\label{L2.6} Let $\left(u_n\right) \subset S_\Theta$ be a minimizing sequence with respect to $m_{\Theta}$. Then, for some subsequence either

$\mathrm{(i)}$ $(u_n)$ is strongly convergent in $X$, or

$\mathrm{(ii)}$ There exists $\left(z_n\right) \subset \mathbb{R}^d\times\mathbb{T}^n$ with $\left|z_n\right| \rightarrow+\infty$ such that the sequence $v_n(x,y)=$ $u_n\left((x,y)+z_n\right)$ strongly converges to a function $v \in S_\Theta$ in $X$ with $\mathcal{I}(v)=m_\Theta$.
\end{lemma}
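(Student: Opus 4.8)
The plan is to run Lions' concentration--compactness argument on the mass densities $\rho_n=|u_n|^2$, excluding the two bad alternatives by means of Lemmas \ref{L2.3}, \ref{L2.4} and \ref{L2.5}. Since $\mathcal{I}(u_n)\to m_\Theta$ and $\mathcal{I}$ is coercive on $S_\Theta$ by Lemma \ref{L2.2}, the sequence $(u_n)$ is bounded in $X$, hence in $H^1(\mathbb{R}^d\times\mathbb{T}^n)$; in particular $\{u_n^2\log u_n^2\}$ is bounded in $L^1$, so Lemma \ref{L2.5} will be available. Because $\mathbb{T}^n$ is compact, the only possible loss of compactness is in the $\mathbb{R}^d$ variable, so I would form the concentration function $Q_n(R)=\sup_{z\in\mathbb{R}^d}\int_{B_R(z)\times\mathbb{T}^n}\rho_n\,dxdy$ and pass to a subsequence for which exactly one of the three Lions alternatives holds: vanishing, dichotomy, or compactness up to translations $z_n\in\mathbb{R}^d$ (viewed in $\mathbb{R}^d\times\mathbb{T}^n$).

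First I would rule out vanishing. If $\sup_{z\in\mathbb{R}^d}\int_{B_R(z)\times\mathbb{T}^n}\rho_n\,dxdy\to0$ for every $R>0$, the waveguide analogue of Lions' vanishing lemma gives $u_n\to0$ in $L^q(\mathbb{R}^d\times\mathbb{T}^n)$ for every $q\in(2,2^{*})$. Since $(f_2)$ yields $|F_2(s)|\le C|s|^{p}$ for some $p\in(2,2^{*})$, this forces $\int_{\mathbb{R}^d\times\mathbb{T}^n}F_2(u_n)\,dxdy\to0$. As $F_1\ge0$ and the quadratic part of $\mathcal{I}$ is nonnegative, I obtain $\liminf_n\mathcal{I}(u_n)\ge0$, contradicting $m_\Theta<0$ from Lemma \ref{L2.3}. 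Hence vanishing cannot occur.

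The main obstacle is the exclusion of dichotomy, and here Lemma \ref{L2.4} is decisive, since it says precisely that $\Theta\mapsto m_\Theta/\Theta^2$ is strictly decreasing. From it the strict subadditivity
\begin{equation*}
m_\Theta<m_{a}+m_{b}\qquad\text{whenever } a,b>0,\ a^{2}+b^{2}=\Theta^{2},
\end{equation*}
follows at once, because $m_a>\tfrac{a^2}{\Theta^2}m_\Theta$ and $m_b>\tfrac{b^2}{\Theta^2}m_\Theta$ add up to the claim. If dichotomy held with splitting parameter $\alpha\in(0,\Theta^2)$, I would introduce radial cut-offs in the $\mathbb{R}^d$ variable to split $u_n\approx u_n^{1}+u_n^{2}$ with asymptotically disjoint supports and $\|u_n^{1}\|_2^2\to\alpha$, $\|u_n^{2}\|_2^2\to\Theta^2-\alpha$. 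The disjointness makes the quadratic and the logarithmic parts of $\mathcal{I}$ additive up to errors that vanish with the cut-off, giving $m_\Theta\ge m_{\sqrt{\alpha}}+m_{\sqrt{\Theta^2-\alpha}}$, in contradiction with the strict subadditivity above. The only delicate point is controlling $\int u^2\log u^2$ across the annular transition region; there the logarithmic term splits cleanly on the two well-separated supports, and the error is absorbed using the $H^1$-bound together with the subcritical estimate exactly as in the vanishing step.

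Since vanishing and dichotomy are impossible, compactness holds: there is $(z_n)\subset\mathbb{R}^d$ such that $v_n:=u_n(\cdot+z_n)$ is tight, i.e.\ for every $\varepsilon>0$ there is $R$ with $\int_{B_R(0)\times\mathbb{T}^n}|v_n|^2\,dxdy\ge\Theta^2-\varepsilon$ for all $n$. By the $X$-bound I may assume $v_n\rightharpoonup v$ weakly in $H^1$ and a.e.; tightness, the compactness of $\mathbb{T}^n$, and the local Rellich embedding then upgrade this to $v_n\to v$ strongly in $L^2(\mathbb{R}^d\times\mathbb{T}^n)$, whence $\|v\|_2=\Theta$ and $v\in S_\Theta$. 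Using weak lower semicontinuity of the quadratic and $F_1$ parts, strong $L^p$-convergence for the $F_2$ part, and the logarithmic Br\'ezis--Lieb splitting of Lemma \ref{L2.5} to handle the remainder $r_n=v_n-v$, I get $\mathcal{I}(v)\le\liminf_n\mathcal{I}(v_n)=m_\Theta$; since $v\in S_\Theta$ gives the reverse inequality, $\mathcal{I}(v)=m_\Theta$. Equality in this lower-semicontinuity chain forces each weakly lower semicontinuous piece to converge, so by reflexivity of $L^{F_1}$ and the Hilbert structure of $H^1$ one concludes $v_n\to v$ strongly in $X$. Finally I distinguish the two conclusions by the behaviour of $(z_n)$: if $(z_n)$ has a bounded subsequence then $z_n\to z$ and $u_n\to v(\cdot-z)$ strongly in $X$, which is alternative (i); otherwise $|z_n|\to+\infty$ along a subsequence and $v_n=u_n(\cdot+z_n)\to v\in S_\Theta$ with $\mathcal{I}(v)=m_\Theta$, which is alternative (ii). I expect the exclusion of dichotomy through Lemma \ref{L2.4} and the clean splitting of the logarithmic energy via Lemma \ref{L2.5} to be the two crucial ingredients.
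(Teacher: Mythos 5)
Your proposal reaches the right conclusion but by a genuinely different and heavier route than the paper. You run the full Lions trichotomy (vanishing/dichotomy/compactness) on the concentration function and exclude dichotomy with cutoffs, whereas the paper never introduces concentration functions at all: it takes the weak limit $u$ of the bounded minimizing sequence (boundedness from Lemma \ref{L2.2}) and splits into the cases $\|u\|_2=\Theta$, $0<\|u\|_2<\Theta$, and $u=0$. Your dichotomy scenario corresponds to the intermediate case, which the paper kills without any cutoff geometry: the logarithmic Br\'ezis--Lieb splitting of Lemma \ref{L2.5}, together with the $L^2$ Br\'ezis--Lieb identity, gives $\mathcal{I}(u_n)=\mathcal{I}(u_n-u)+\mathcal{I}(u)+o_n(1)$ directly for the overlapping functions $u_n-u$ and $u$, and then the strict inequality of Lemma \ref{L2.4} yields the contradiction $m_\Theta>m_\Theta$. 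The case $u=0$ is handled exactly as in your vanishing step ($m_\Theta<0$ from Lemma \ref{L2.3} forces $\int F_2(u_n)\geq C$, hence mass concentration at translations $z_n$, necessarily unbounded because $u=0$), after which the first case is rerun on the translated sequence. So the paper's approach buys economy: Lemma \ref{L2.5} replaces your entire annular splitting analysis, while your approach is more portable to settings where a logarithmic Br\'ezis--Lieb lemma is unavailable. Your final step (upgrading to strong $X$-convergence from equality in the lower-semicontinuity chain, via strong $L^2$-convergence, interpolation for the $F_2$ term, and norm convergence of the gradient and $F_1$ pieces) coincides with the paper's.

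One step of your argument is wrong as stated, though fixable: the claim that in the transition annulus the error in the logarithmic term ``is absorbed using the $H^1$-bound together with the subcritical estimate exactly as in the vanishing step.'' The function $s^2\log s^2$ is not subcritically dominated near $s=0$: for small $|s|$ one only has $|s^2\log s^2|\leq C_\varepsilon|s|^{2-\varepsilon}$, and $\int_{A_n}|u_n|^{2-\varepsilon}$ is not controlled by the $H^1$-bound plus small annular $L^2$-mass, since $|A_n|$ grows with the annulus. The repair is to split through the paper's decomposition \eqref{eq1.2}: the $F_2$ part is genuinely subcritical and its annular error is handled by localized Gagliardo--Nirenberg with the small $L^2$-mass; the $F_1$ part, which carries the bad small-$s$ behaviour, enters $\mathcal{I}$ with a plus sign and, being even, convex, with $F_1(0)=0$, satisfies $F_1(\varphi u)\leq\varphi F_1(u)$ for $0\leq\varphi\leq1$, so with disjointly supported cutoffs $\varphi+\psi\leq1$ you get the needed superadditivity $\int F_1(u_n)\geq\int F_1(\varphi u_n)+\int F_1(\psi u_n)$ with no error term at all. (You should also note that passing from $\|u_n^1\|_2^2\to\alpha$ to the bound $m_\Theta\geq m_{\sqrt{\alpha}}+m_{\sqrt{\Theta^2-\alpha}}$ needs a small mass-rescaling or continuity argument for $a\mapsto m_a$, standard via the scaling in Lemma \ref{L2.4}.) With these corrections your route closes; alternatively, adopting the paper's weak-limit argument lets Lemma \ref{L2.5} do all of this for free.
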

\begin{proof}
According to Lemma \ref{L2.2}, the sequence $\left(u_n\right)$ is bounded in $X$ and $u_n \rightharpoonup u$ in $X$ for some subsequence. If $u \neq 0$ and $\|u\|_2=b \neq \Theta$, we must have $b \in(0, \Theta)$. By Br$\mathrm{\acute{e}}$zis-Lieb lemma, it holds $\|u_n\|_2^2=\|u_n-u\|_2^2+\|u\|_2^2+o_n(1)$.
Moreover, it follows from Lemma \ref{L2.5} that
$$
\int_{\mathbb{R}^d\times\mathbb{T}^n} u_n^2 \log u_n^2 d xdy=\int_{\mathbb{R}^d\times\mathbb{T}^n}\left|u_n-u\right|^2 \log \left|u_n-u\right|^2 d xdy+\int_{\mathbb{R}^d\times\mathbb{T}^n} u^2 \log u^2 d xdy+o_n(1).
$$
Let $v_n=u_n-u, d_n=\|v_n\|_2$ and supposing that $d_n \rightarrow d$, then $\Theta^2=b^2+d^2$ and $d_n \in(0, \Theta)$ for $n \in \mathbb{N}$ large enough. Hence,
$$
m_{\Theta}+o_n(1)=\mathcal{I}\left(u_n\right)=\mathcal{I} \left(v_n\right)+\mathcal{I}(u)+o_n(1) \geq m_{d_n}+m_{b}+o_n(1).
$$
By using Lemma \ref{L2.4}, we have $m_{\Theta}+o_n(1) \geq \frac{d_n^2}{\Theta^2} m_{\Theta}+m_{b}+o_n(1)$. Letting $n \rightarrow+\infty$, it holds $m_{\Theta} \geq \frac{d^2}{\Theta^2} m_{\Theta}+m_{b}$. Note that $b \in(0, \Theta)$, employing again Lemma \ref{L2.4}, we obtain that
$$
m_{\Theta}>\frac{d^2}{\Theta^2} m_{\Theta}+\frac{b^2}{\Theta^2} m_{\Theta}=\left(\frac{d^2}{\Theta^2}+\frac{b^2}{\Theta^2}\right) m_{\Theta}=m_{\Theta},
$$
which is absurd, so $\|u\|_2=\Theta$. Due to $\|u_n\|_2= \|u\|_2=\Theta, u_n \rightharpoonup u$ in $L^2(\mathbb{R}^d\times\mathbb{T}^n)$ and $L^2(\mathbb{R}^d\times\mathbb{T}^n)$ is reflexive, we obtain $u_n \rightarrow u $ in $L^2(\mathbb{R}^d\times\mathbb{T}^n)$. Using interpolation theorem in the Lebesgue spaces and $(f_2)$, one has
$$
\int_{\mathbb{R}^d\times\mathbb{T}^n} F_2\left(u_n\right) d xdy \rightarrow \int_{\mathbb{R}^d\times\mathbb{T}^n} F_2(u) d xdy.
$$
These limits together with the limit $m_{\Theta}=\lim\limits_{n \rightarrow+\infty} \mathcal{I} (u_n)$ and the fact that $F_1 \geq 0$ provide $m_{\Theta} \geq \mathcal{I}(u)$.
Note that $u \in S_\Theta$, so $\mathcal{I}(u)=m_{\Theta}$, then $\lim\limits_{n \rightarrow+\infty} \mathcal{I}(u_n)=\mathcal{I}(u)$ and
$$
\int_{\mathbb{R}^d\times\mathbb{T}^n}\left|\nabla u_n\right|^2 d xdy \rightarrow \int_{\mathbb{R}^d\times\mathbb{T}^n}|\nabla u|^2 d xdy,\ \int_{\mathbb{R}^N} F_1\left(u_n\right) d xdy \rightarrow \int_{\mathbb{R}^d\times\mathbb{T}^n} F_1(u) d xdy.
$$
Therefore, we conclude that $u_n \rightarrow u$ in $X$.

Next, let us assume that $u=0$. We prove there exists a constant $C>0$ such that
\begin{equation}\label{eq2.1}
  \int_{\mathbb{R}^d\times\mathbb{T}^n} F_2\left(u_n\right) d xdy \geq C, \ \text { for } n \in \mathbb{N} \text { large. }
\end{equation}
In fact, if \eqref{eq2.1} is not true, there exists a subsequence of $(u_n)$  such that $\int_{\mathbb{R}^d\times\mathbb{T}^n} F_2\left(u_n\right) d xdy \rightarrow 0$ as $n \rightarrow+\infty$. Now, recalling that
$$
0>m_{\Theta}=\mathcal{I}(u_n)+o_n(1) \geq-\int_{\mathbb{R}^d\times\mathbb{T}^n} F_2(u_n) d xdy+o_n(1), \ n \in \mathbb{N} \text { large},
$$
which is a contradiction. Hence, there exist $R, \beta>0$ and $z_n \in \mathbb{R}^d\times\mathbb{T}^n$ such that
\begin{equation}\label{eq2.2}
  \int_{B_R(z_n)}\left|u_n\right|^2 d xdy \geq \beta  \text { for all } n \in \mathbb{N},
\end{equation}
otherwise we would have $u_n \rightarrow 0$ in $L^t(\mathbb{R}^d\times\mathbb{T}^n)$ for all $t \in(2,2^*)$ that implies $F_2(u_n) \rightarrow$ 0 in $L^1(\mathbb{R}^d\times\mathbb{T}^n)$, which contradicts \eqref{eq2.1}. Since $u=0$, the inequality \eqref{eq2.2} together with the Sobolev embedding implies that $\left(z_n\right)$ is unbounded. From this, considering $\tilde{u}_n(x,y)=u\left((x,y)+z_n\right)$, clearly $(\tilde{u}_n) \subset S_\Theta$ and it is also a minimizing sequence with respect to $m_{\Theta}$. Moreover, there exists $v \in X \backslash\{0\}$ such that
$$
\tilde{u}_n \rightharpoonup v \text { in } X \text { and } \tilde{u}_n(x,y) \rightarrow v(x,y) \text { a.e. in } \mathbb{R}^d\times\mathbb{T}^n.
$$
Following as in the first part of the proof, we know that $\tilde{u}_n \rightarrow v$ in $X$.
\end{proof}
\noindent\textbf{Proof of Theorem \ref{t1.1}}
By Lemma \ref{L2.1}, there exists a bounded minimizing sequence $\left(u_n\right) \subset S_\Theta$ such that $\mathcal{I}(u_n) \rightarrow m_{\Theta}$. Due to Lemma \ref{L2.6}, there exists $u \in S_\Theta$ with $\mathcal{I}(u)=m_{\Theta}$. Therefore, by the Lagrange multiplier, there exists $\lambda_\Theta \in \mathbb{R}$ such that $\mathcal{I}^{\prime}(u)=\lambda_\Theta \Psi^{\prime}(u)$ in $X^{\prime}$, where $\Psi: X \rightarrow \mathbb{R}$ is defined as $\Psi(u)=\frac{1}{2} \int_{\mathbb{R}^d\times\mathbb{T}^n} u^2 d xdy,\ u \in X$. Hence, we have
$$
-\Delta_{x,y} u +\lambda_\Theta u=u \log u^2  \  \text { in } \mathbb{R}^d\times\mathbb{T}^n.
$$
Moreover, since $\mathcal{I} (u)=m_{\Theta}<0$, it follows that $\lambda_\Theta>1$. Indeed, the above equality implies
$$
\int_{\mathbb{R}^d\times\mathbb{T}^n} |\nabla_{x,y} u|^2 d xdy+\lambda_\Theta \Theta^2=\int_{\mathbb{R}^d\times\mathbb{T}^n} u^2 \log u^2 d xdy.
$$
According to \eqref{eq1.2}, we have
$$
\frac{1}{2} \int_{\mathbb{R}^d\times\mathbb{T}^n} |\nabla_{x,y} u|^2 d xdy+\int_{\mathbb{R}^d\times\mathbb{T}^n} F_1(u) d xdy-\int_{\mathbb{R}^d\times\mathbb{T}^n} F_2(u) d xdy=-\frac{\lambda_\Theta \Theta^2}{2},
$$
which implies
$$
0>m_{\Theta}=\mathcal{I}(u) = \frac{(-\lambda_\Theta+1) \Theta^2}{2},
$$
which shows that $\lambda_\Theta>1$.
Next, we claim that $u$ can be chosen as a positive function. In fact, it is easy to check that $\mathcal{I}(|u|)=\mathcal{I}_\mu(u)$. Moreover, $|u| \in S_\Theta$ because of $u \in S_\Theta$, so
$m_{\Theta}=I_\mu(u)=I_\mu(|u|) \geq m_{\Theta}$, which shows that $\mathcal{I}(|u|)=m_{\Theta}$, so  we can replace $u$ by $|u|$. Furthermore, if $u^*$ denotes the Schwarz symmetrization of $u$ see \cite{HH1}, we have
$$
\int_{\mathbb{R}^d\times\mathbb{T}^n}|\nabla u|^2 d xdy \geq \int_{\mathbb{R}^d\times\mathbb{T}^n}\left|\nabla u^*\right|^2 d xdy, \ \int_{\mathbb{R}^d\times\mathbb{T}^n}|u|^2 d xdy=\int_{\mathbb{R}^d\times\mathbb{T}^n}\left|u^*\right|^2 d xdy
$$
and by \cite{HH2}
$$
\int_{\mathbb{R}^d\times\mathbb{T}^n} F_1(u) d xdy=\int_{\mathbb{R}^d\times\mathbb{T}^n} F_1\left(u^*\right) d xdy, \ \int_{\mathbb{R}^d\times\mathbb{T}^n} F_2(u) d xdy=\int_{\mathbb{R}^d\times\mathbb{T}^n} F_2\left(u^*\right) d xdy,
$$
then $u^* \in S_\Theta$ and $I_\mu\left(u^*\right)=m_{\Theta}$, from where it follows that we can replace $u$ by $u^*$. Thus, we may show that $u$ is radial modulo translations.   Moreover, by using a suitable version of maximum principle, we obtain that $u$ is positive in $\mathbb{R}^d\times\mathbb{T}^n$.

\section{Dependence of the ground states}

\subsection{Parameter tends to zero}
Now, we consider the case when $\mu\rightarrow0$.
\begin{lemma}\label{L3.1}
$\lim\limits_{\mu\rightarrow0}m_{\Theta,\mu}<(2\pi)^{n}\widetilde{m}_{\frac{\Theta}{(2\pi)^{\frac{n}{2}}}}$.
\end{lemma}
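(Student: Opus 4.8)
The plan is to separate an elementary upper bound from the genuine content, which is the strict inequality. First I would record that $\mu\mapsto m_{\Theta,\mu}$ is non-decreasing (since $\mathcal{I}_\mu(u)=\mathcal{I}_0(u)+\tfrac{\mu}{2}\int_{\mathbb{R}^d\times\mathbb{T}^n}|\nabla_y u|^2\,dxdy$ increases in $\mu$, where $\mathcal{I}_0(u):=\tfrac12\int(|\nabla_x u|^2+u^2)\,dxdy-\tfrac12\int u^2\log u^2\,dxdy$), so $\lim_{\mu\to0}m_{\Theta,\mu}=\inf_{\mu>0}m_{\Theta,\mu}$ exists. For the upper bound, any $v\in\widetilde S_\Theta$ lifts to $\bar v(x,y):=v(x)\in S_\Theta$ with $\nabla_y\bar v=0$, whence $\mathcal{I}_\mu(\bar v)=(2\pi)^n\widetilde{\mathcal{I}}(v)$ for every $\mu$; taking the infimum over $v$ gives $m_{\Theta,\mu}\le(2\pi)^n\widetilde m$, which is exactly the right-hand side. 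Thus the whole point is to beat this by a fixed margin. The reduction I would use is: it suffices to produce \emph{one} admissible $u_0\in S_\Theta$ with $\nabla_y u_0\in L^2$ and $\mathcal{I}_0(u_0)<(2\pi)^n\widetilde m$, because then $\mathcal{I}_\mu(u_0)=\mathcal{I}_0(u_0)+\tfrac{\mu}{2}\|\nabla_y u_0\|_2^2\to\mathcal{I}_0(u_0)$ as $\mu\to0$ with $u_0$ held fixed.

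Next I would construct such a $u_0$ by concentrating the mass on part of the torus, which is ``free'' for $\mathcal{I}_0$. Set $b_\ast=\Theta/(2\pi)^{n/2}$ and $a_0=b_\ast^2=\Theta^2/(2\pi)^n$. Choose a smooth, fast-decaying near-minimizer $w$ on $\mathbb{R}^d$ at mass $\sqrt2\,b_\ast$, i.e.\ $\|w\|_{L^2(\mathbb{R}^d)}^2=2a_0$ and $\widetilde{\mathcal{I}}(w)\le\widetilde m(\sqrt2\,b_\ast)+\varepsilon$, and a cutoff $\psi\in C^\infty(\mathbb{T}^n;[0,1])$ equal to $1$ on a set of measure $\tfrac{(2\pi)^n}{2}$, equal to $0$ off a thin neighborhood of it, with $\int_{\mathbb{T}^n}\psi^2\,dy=\tfrac{(2\pi)^n}{2}$. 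Put $u_0=\psi\,w$. A mass check gives $\|u_0\|_2^2=\|w\|_2^2\int_{\mathbb{T}^n}\psi^2=2a_0\cdot\tfrac{(2\pi)^n}{2}=\Theta^2$, so $u_0\in S_\Theta$ (membership in $X$ follows from the smoothness and Gaussian-type decay of $w$). Separating variables in $\mathcal{I}_0$ yields
\[
\mathcal{I}_0(u_0)=\frac{(2\pi)^n}{2}\,\widetilde{\mathcal{I}}(w)-a_0\int_{\mathbb{T}^n}\psi^2\log\psi^2\,dy .
\]
Since $\psi^2\log\psi^2\le 0$ is bounded below by $-1/e$ and is supported in the thin transition set, the last term is nonnegative and tends to $0$ as the transition width shrinks; hence $\mathcal{I}_0(u_0)$ can be made arbitrarily close to $\tfrac{(2\pi)^n}{2}\widetilde m(\sqrt2\,b_\ast)$.

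Finally I would extract the strict gain from the logarithmic scaling identity. Running the proof of Lemma \ref{L2.4} verbatim on $\mathbb{R}^d$ (it uses only $\widetilde{\mathcal{I}}(\xi v)=\xi^2\widetilde{\mathcal{I}}(v)-\xi^2(\log\xi)\|v\|_2^2$ and $\widetilde{\mathcal{I}}(v)=\widetilde{\mathcal{I}}(|v|)$) shows that $b\mapsto b^{-2}\widetilde m(b)$ is strictly decreasing, i.e.\ $\tfrac{b^2}{b'^2}\widetilde m(b')<\widetilde m(b)$ for $0<b<b'$. Specializing to $b=b_\ast$, $b'=\sqrt2\,b_\ast$ gives $\tfrac12\widetilde m(\sqrt2\,b_\ast)<\widetilde m(b_\ast)=\widetilde m$, a strict gap $2\eta>0$ after multiplying by $(2\pi)^n$. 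I then fix $w,\psi$ so that $\mathcal{I}_0(u_0)<(2\pi)^n\widetilde m-\eta$; now $K:=\|\nabla_y u_0\|_2^2<\infty$ is fixed, and for $\mu<2\eta/K$ we get $\mathcal{I}_\mu(u_0)<(2\pi)^n\widetilde m$, so $m_{\Theta,\mu}\le\mathcal{I}_\mu(u_0)$ and, letting $\mu\to0$, $\lim_{\mu\to0}m_{\Theta,\mu}\le\mathcal{I}_0(u_0)<(2\pi)^n\widetilde m$. The main obstacle is precisely this strict inequality: because the scaling trick of \cite{STNT2014,HHYL2024} is unavailable for the log nonlinearity, the gain must instead be manufactured by redistributing mass non-uniformly in $y$ — admissible in the limit only because the $y$-gradient penalty vanishes — and then certified by the strict super-additivity $\widetilde m(\sqrt2\,b_\ast)<2\widetilde m(b_\ast)$. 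The delicate points to handle carefully are the order of limits (the competitor has a finite but large $y$-gradient, so it must be frozen before sending $\mu\to0$) and confirming that the scaling lemma transfers unchanged to the Euclidean factor.
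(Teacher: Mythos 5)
Your proof is correct, and it rests on the same two pillars as the paper's: (i) freeze a single $y$-dependent competitor $u_0$ so that $\mathcal{I}_\mu(u_0)\to\mathcal{I}_0(u_0)$ as $\mu\to 0$, and (ii) manufacture the strict gap by concentrating mass in the $y$-variable and invoking the strict decrease of $b\mapsto b^{-2}\widetilde m_b$, which is exactly Lemma \ref{L2.4} transferred verbatim to $\mathbb{R}^d$ (the paper invokes the same transfer). Where you differ is in the implementation of the $y$-profile and the treatment of the logarithmic cross term. The paper uses a tent-shaped profile $\varphi$ whose slope is calibrated so that $\|\varphi\|_{L_y^2}^2=\int_0^{2\pi}\varphi^2\log\varphi^2\,dy$, which forces the cross term $I$ to vanish up to $o_\varepsilon(1)$ after mollification, and it concentrates by an arbitrary factor (letting $a\to\pi$); it also needs an exact optimizer $Q$ of the Euclidean problem. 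You instead use a $[0,1]$-valued plateau cutoff, for which the cross term $-a_0\int\psi^2\log\psi^2\,dy$ is automatically nonnegative and small because $t\log t$ vanishes at $t\in\{0,1\}$ and the integrand lives only in the thin transition layer; you concentrate by the fixed factor $\sqrt2$ and get away with a near-minimizer $w$ rather than an exact optimizer. This is arguably cleaner: no explicit computation of $\int\varphi^2\log\varphi^2$, no calibration of the slope $b$, and no existence assumption for the Euclidean minimizer. One small inaccuracy to repair: as stated, your $\psi$ cannot simultaneously equal $1$ on a set of measure exactly $\tfrac{(2\pi)^n}{2}$, have a nontrivial transition region, and satisfy $\int_{\mathbb{T}^n}\psi^2\,dy=\tfrac{(2\pi)^n}{2}$ (the integral would then exceed $\tfrac{(2\pi)^n}{2}$); shrink the plateau slightly so that, transition layer included, the normalization $\int\psi^2=\tfrac{(2\pi)^n}{2}$ holds exactly --- the bound $0\le -\int\psi^2\log\psi^2\le e^{-1}\,|\{0<\psi<1\}|$ and the rest of the argument are unaffected. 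Also note that your penalty term has the ``wrong'' sign (it raises the energy), so it is essential --- as you correctly arrange --- that it be beaten by the fixed gap $(2\pi)^n\widetilde m_{b_*}-\tfrac{(2\pi)^n}{2}\widetilde m_{\sqrt2 b_*}>0$ coming from the monotonicity lemma.
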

\begin{proof}
Define the function $\varphi:[0,2\pi]\rightarrow[0,+\infty)$ by
 \begin{equation*}
   \varphi(y)=\left\{\aligned
    &0, \  y\in[0,a]\cup[2\pi-a,2\pi], \\
    &b(y-a), \  y\in[a,\pi],  \\
    &\varphi(2\pi-y),\   y\in[\pi,2\pi-a],
\endaligned
\right.
\end{equation*}
where $a \in (0, \pi )$ and $b \in (0,\infty )$. Now, we determine the value of $b$. By using direct calculations, it follows that
$$
\|\varphi\|_{L_{y}^2}^2=\frac{2 b^2(\pi-a)^3}{3}.
$$
Next, consider the integration $\int_0^{2\pi}\varphi^2\log \varphi^2dy$. Let $t=\log z$, one has
\begin{eqnarray*}
  \int_0^{2\pi}\varphi^2\log \varphi^2dy&=&2 \int_a^{\pi}b^2(y-a)^2\log b^2(y-a)^2d(y-a) \\
  &=&4b^2 \int_0^{\pi-a}z^2\log zdz+4b^2\log b \int_a^{\pi}(y-a)^2 d(y-a)  \\
  &=&4b^2 \int_{-\infty}^{\log(\pi-a)}te^{3t}dt+4b^2\log b \int_a^{\pi}(y-a)^2 d(y-a)  \\
  &=&4b^2\left(\frac{1}{3}z^3\log z-\frac{1}{9}z^3\right)\Big|_{0}^{\pi-a}+\frac{4b^2}{3}\log b z^3\Big|_{0}^{\pi-a}\\
  &=&\frac{4b^2(\pi-a)^3}{9}(3\log(\pi-a)-1+3\log b).
\end{eqnarray*}
From $\|\varphi\|_{L_y^2}^2=\int_0^{2\pi}\varphi^2\log \varphi^2dy$, we obtain that
$$
b=e^{\frac{5}{6}}(\pi-a)^{-1}.
$$
Hence,
$$
\|\varphi\|_{L_{y}^2}^2= \frac{2(\pi-a)^3}{3}\cdot e^{\frac{5}{3}}(\pi-a)^{-2}=\frac{2e^{\frac{5}{3}}(\pi-a)}{3} .
$$
Let $\varphi_\varepsilon$ be the $\varepsilon$-mollifier of $\varphi$ on $[0,2 \pi]$ with some to be determined small $\varepsilon>0$. In particular, since $\varphi$ has compact support in $(0,2 \pi)$, so is $\varphi_{\varepsilon}$ for $\varepsilon \ll 1$. Next, let $Q$ be an optimizer of $\widetilde{m}_{\frac{\Theta}{\|\varphi\|_{L_y^2}^{ n}}}$ and define
$$
\psi(x, y):=Q(x)\left(\frac{\|\varphi\|_{L_y^2}}{\left\|\varphi_\varepsilon\right\|_{L_y^2}}\right) ^n \prod_{j=1}^n \varphi_\varepsilon\left(y_j\right).
$$
This is to be understood that we extend $\varphi_\varepsilon$ $2 \pi$-periodically along the $y$-direction, which is possible since $\varphi_\varepsilon$ has compact support in $(0,2 \pi)$ when $\varepsilon \ll 1$. We have then $\|\psi\|_{L_{x,y}^2}=\Theta$. Moreover,
\begin{eqnarray*}
\mathcal{I}_0(\psi)&=&\frac{1}{2} \int_{\mathbb{R}^d\times\mathbb{T}^n}[|\nabla_{x}\psi|^2+|\psi|^2] d xdy -\frac{1}{2} \int_{\mathbb{R}^d\times\mathbb{T}^n}\psi^{2}\log \psi^2 d xdy\\
&= & \frac{1}{2}\|\varphi\|_{L_y^2}^{2 n}\left(\left\|\nabla_x Q\right\|_{L_y^2}^2+\left\| Q\right\|_{L_y^2}^2\right)- \frac{1}{2} \int_{\mathbb{R}^d\times\mathbb{T}^n}\psi^{2}\log \psi^2 d xdy \\
&= & \frac{1}{2}\|\varphi\|_{L_y^2}^{2 n}\left(\left\|\nabla_x Q\right\|_{L_y^2}^2+\left\| Q\right\|_{L_y^2}^2\right)- \frac{1}{2} \int_{\mathbb{R}^d\times\mathbb{T}^n}Q^2\left(\frac{\|\varphi\|_{L_y^2}}{\left\|\varphi_\varepsilon\right\|_{L_y^2}}\right) ^{2n} \left(\prod_{j=1}^{n} \varphi_\varepsilon\right)^2\log Q^2  d xdy \\
&&-\frac{1}{2} \int_{\mathbb{R}^d\times\mathbb{T}^n}Q^2\left(\frac{\|\varphi\|_{L_y^2}}{\left\|\varphi_\varepsilon\right\|_{L_y^2}}\right) ^{2n} \left(\prod_{j=1}^n \varphi_\varepsilon\right)^2\log \left(\frac{\|\varphi\|_{L_y^2}}{\left\|\varphi_\varepsilon\right\|_{L_y^2}}\right) ^n \left(\prod_{j=1}^n \varphi_\varepsilon\right)^2 d xdy\\
&= &   \frac{1}{2}\|\varphi\|_{L_y^2}^{2 n}\left(\left\|\nabla_x Q\right\|_{L_y^2}^2+\left\| Q\right\|_{L_y^2}^2- \int_{\mathbb{R}^d}Q^2 \log Q^2  d x \right)+I \\
&= & \|\varphi\|_{L_y^2}^{2 n}\widetilde{m}_{\frac{\Theta}{\|\varphi\|_{L_y^2}^{ n}}}+I.
\end{eqnarray*}
By using Lemma \ref{L2.4}, we can obtain that the mapping $\Theta \mapsto \Theta^{-2} \widetilde{m}_\Theta$ is strictly decreasing on $(0, \infty)$. Note that $\|\varphi\|_{L_y^2} \rightarrow 0$ as $a \rightarrow \pi$. Hence, we can choose $a$ sufficiently close to $\pi$ such that
$$
\|\varphi\|_{L_y^2}^{2 n}\widetilde{m}_{\frac{\Theta}{\|\varphi\|_{L_y^2}^{ n}}}<(2 \pi)^{n} \widetilde{m}_{\frac{\Theta}{(2\pi)^{\frac{n}{2}}}}.
$$
Therefore, there exists some $\zeta>0$ such that $\|\varphi\|_{L_y^2}^{2 n}\widetilde{m}_{\frac{\Theta}{\|\varphi\|_{L_y^2}^{ n}}}+\zeta<(2 \pi)^{n} \widetilde{m}_{\frac{\Theta}{(2\pi)^{\frac{n}{2}}}}$. By the properties of a mollifier operator, we also know that
$$
\left\|\varphi_\varepsilon\right\|_{L_y^2}^2=\|\varphi\|_{L_y^2}^2+o_\varepsilon(1)=\int_0^{2\pi}\varphi^2\log \varphi^2dy+o_\varepsilon(1)=\int_0^{2\pi}\varphi_\varepsilon^2\log \varphi_\varepsilon^2dy+o_\varepsilon(1).
$$
Taking $\varepsilon$ sufficiently small, it follows  that $|I| \leq \zeta$, then we get $\mathcal{I}_0(\psi)<(2 \pi)^{n} \widetilde{m}_{\frac{\Theta}{(2 \pi)^n}}$. Consequently,
$$
\lim\limits_{\mu \rightarrow 0} m_{\Theta, \mu} \leq \lim \limits_{\mu \rightarrow 0} \mathcal{I}_\mu(\psi)=\mathcal{I}_0(\psi)<(2 \pi)^{n} \widetilde{m}_{\frac{\Theta}{(2\pi)^{\frac{n}{2}}}},
$$
which completes the proof.
\end{proof}
\subsection{Parameter tends to infinity}
Now, we consider the case when $\mu\rightarrow+\infty$.
\begin{lemma}\label{L3.2}
Let $u_\mu$ be an optimizer of $m_{\Theta, \mu}$. Then we have
\begin{equation}\label{eq3.1}
  \lim\limits_{\mu \rightarrow \infty} m_{\Theta, \mu}\leq(2 \pi)^{n} \widetilde{m}_{\frac{\Theta}{(2\pi)^{\frac{n}{2}}}}
\end{equation}
and
\begin{equation}\label{eq3.2}
  \lim\limits_{\mu \rightarrow \infty}  \left\|\nabla_y u_\mu\right\|_2^2=0.
\end{equation}
\end{lemma}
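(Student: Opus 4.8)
The plan is to obtain the upper bound \eqref{eq3.1} by testing $\mathcal{I}_\mu$ against a competitor that is constant in the $y$-variable, and then to extract the gradient decay \eqref{eq3.2} from the elementary identity relating $\mathcal{I}_\mu$ and $\mathcal{I}$, using \eqref{eq3.1} as an a priori ceiling.

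First I would prove \eqref{eq3.1}. Let $Q$ be an optimizer of $\widetilde{m}_{\frac{\Theta}{(2\pi)^{n/2}}}$ on $\mathbb{R}^d_x$ (attainment on $\mathbb{R}^d$ is classical, and parallels the proof of Theorem \ref{t1.1}), so that $\int_{\mathbb{R}^d} Q^2\,dx = \frac{\Theta^2}{(2\pi)^n}$ and $\widetilde{\mathcal{I}}(Q) = \widetilde{m}_{\frac{\Theta}{(2\pi)^{n/2}}}$. Define $\psi(x,y) := Q(x)$, viewed as a function on $\mathbb{R}^d\times\mathbb{T}^n$. Since $|\mathbb{T}^n| = (2\pi)^n$, one checks $\|\psi\|_2^2 = (2\pi)^n \int_{\mathbb{R}^d} Q^2\,dx = \Theta^2$, so $\psi \in S_\Theta$; and because $\nabla_y\psi \equiv 0$ the entire $\mu$-dependent term vanishes. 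Integrating the $y$-independent integrand over $\mathbb{T}^n$ produces a factor $(2\pi)^n$, giving $\mathcal{I}_\mu(\psi) = (2\pi)^n \widetilde{\mathcal{I}}(Q) = (2\pi)^n \widetilde{m}_{\frac{\Theta}{(2\pi)^{n/2}}}$ for every $\mu$. Hence $m_{\Theta,\mu} \leq \mathcal{I}_\mu(\psi) = (2\pi)^n \widetilde{m}_{\frac{\Theta}{(2\pi)^{n/2}}}$ uniformly in $\mu$, and since $\mu \mapsto m_{\Theta,\mu}$ is non-decreasing the limit exists and \eqref{eq3.1} follows.

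For \eqref{eq3.2}, the key observation is the identity
$$
\mathcal{I}_\mu(u) = \mathcal{I}(u) + \frac{\mu-1}{2}\|\nabla_y u\|_2^2, \qquad \forall u \in X,
$$
obtained by comparing the coefficients $\frac{\mu}{2}$ and $\frac{1}{2}$ in front of $\|\nabla_y u\|_2^2$ in $\mathcal{I}_\mu$ and $\mathcal{I} = \mathcal{I}_1$. Applying this to the optimizer $u_\mu \in S_\Theta$ and using $\mathcal{I}(u_\mu) \geq m_\Theta = \inf_{S_\Theta}\mathcal{I}$, I get
$$
m_{\Theta,\mu} = \mathcal{I}_\mu(u_\mu) \geq m_\Theta + \frac{\mu-1}{2}\|\nabla_y u_\mu\|_2^2.
$$
Combining with the ceiling $m_{\Theta,\mu} \leq (2\pi)^n \widetilde{m}_{\frac{\Theta}{(2\pi)^{n/2}}}$ from \eqref{eq3.1} yields, for $\mu > 1$,
$$
\|\nabla_y u_\mu\|_2^2 \leq \frac{2}{\mu-1}\Big[(2\pi)^n \widetilde{m}_{\frac{\Theta}{(2\pi)^{n/2}}} - m_\Theta\Big].
$$
The bracket is a finite constant independent of $\mu$, and non-negative because the same test function gives $m_\Theta = m_{\Theta,1} \leq (2\pi)^n \widetilde{m}_{\frac{\Theta}{(2\pi)^{n/2}}}$; letting $\mu \to \infty$ then gives \eqref{eq3.2}.

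The conceptual point to get right is that one should resist attempting a sharp matching lower bound $m_{\Theta,\mu} \geq (2\pi)^n\widetilde{m}_{\frac{\Theta}{(2\pi)^{n/2}}} - o(1)$, which would require delicate $y$-slicing together with convexity or monotonicity of $\sigma \mapsto \widetilde{m}_\sigma$ (the very hypothesis reappearing in Theorem \ref{t1.2}(2)). It is unnecessary here: the crude bound $\mathcal{I}(u_\mu)\geq m_\Theta$ suffices, since the diverging prefactor $\frac{\mu-1}{2}$ amplifies any residual $y$-gradient and forces $\|\nabla_y u_\mu\|_2^2 \to 0$ at rate $O(1/\mu)$. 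The only genuine input beyond bookkeeping is the existence of the optimizer $Q$ on $\mathbb{R}^d$; should attainment of $\widetilde{m}_{\frac{\Theta}{(2\pi)^{n/2}}}$ not be taken for granted, one simply replaces $Q$ by an $\eta$-almost minimizer and lets $\eta \to 0$ after the uniform bound has been established.
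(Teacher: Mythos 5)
Your proof is correct and takes essentially the same route as the paper: the upper bound \eqref{eq3.1} comes from testing $\mathcal{I}_\mu$ with a $y$-independent optimizer of the $\mathbb{R}^d_x$ problem, and \eqref{eq3.2} from the identity $\mathcal{I}_\mu(u)=\mathcal{I}(u)+\frac{\mu-1}{2}\|\nabla_y u\|_2^2$ combined with the lower-boundedness of $\mathcal{I}$ on $S_\Theta$ and the uniform ceiling from \eqref{eq3.1}. The only cosmetic difference is that you argue directly with the lower bound $m_\Theta$ and obtain an explicit $O(1/\mu)$ rate, whereas the paper argues by contradiction, replacing $m_\Theta$ by a Gagliardo--Nirenberg constant $C(\vartheta)$ obtained exactly as in its Lemma \ref{L2.2}.
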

\begin{proof}
By assuming a candidate $u \in S_\Theta$ is independent of $y$ we can obtain that $m_{\Theta, \mu} \leq(2 \pi)^{n} \widetilde{m}_{\frac{\Theta}{(2\pi)^{\frac{n}{2}}}}$. Indeed, let $w(x)\in X(\mathbb{R}_x^d)$ be such that $\|w\|_{L_x^2}^2=\frac{\Theta^2}{(2\pi)^{n}}$ and $\widetilde{\mathcal{I}}(w)=\widetilde{m}_{\frac{\Theta}{(2\pi)^{\frac{n}{2}}}}$. It follows that
\begin{eqnarray*}
  m_{\Theta, \mu}  &\leq& \mathcal{I}_\mu(w(x))   \\
    &=&(2\pi)^n\left(\frac{1}{2} \int_{\mathbb{R}^d}[|\nabla_{x}w|^2+|w|^2] d x -\frac{1}{2} \int_{\mathbb{R}^d}w^{2}\log w^2 d x\right)   \\
    &=& (2\pi)^{n}  \widetilde{m}_{\frac{\Theta}{(2\pi)^{\frac{n}{2}}}}.
\end{eqnarray*}
Now we prove that
\begin{equation}\label{eq3.3}
  \lim_{j\rightarrow\infty}\int_{\mathbb{R}^d\times\mathbb{T}^n}|\nabla_yu_{\mu_j}|^2dxdy=0.
\end{equation}
In fact, suppose \eqref{eq3.3} does not hold, then up to a subsequence of $\mu_j$ we may assume that
$$
\inf\limits_{\mu>0}\left\|\nabla_y u_\mu\right\|_2^2=\zeta>0,\ \lim\limits_{j\rightarrow\infty}\mu_j=\infty,
$$
then it follows that
$$
\lim _{j\rightarrow \infty}(\mu_j-1)\left\|\nabla_y u_{\mu_j}\right\|_2^2=\infty.
$$
Let $\vartheta:=\frac{(\alpha-2)(d+n)}{2}$, we obtain that $\vartheta<2$ because of $\alpha<2+\frac{4}{d+n}$. Similar to the proof of Lemma \ref{L2.2}, there exist some $C_1, C_2>0$ such that
 \begin{eqnarray*}
&&\frac{1}{2} \int_{\mathbb{R}^d\times\mathbb{T}^n}[|\nabla_{x,y}u|^2+|u|^2] d xdy+\int_{\mathbb{R}^d\times\mathbb{T}^n} F_1(u) d xdy-\int_{\mathbb{R}^d\times\mathbb{T}^n} F_2(u) d xdy \\ &\geq &\frac{1}{2} \int_{\mathbb{R}^d\times\mathbb{T}^n}[|\nabla_{x,y}u|^2+|u|^2] d xdy+\int_{\mathbb{R}^d\times\mathbb{T}^n} F_1(u) d xdy \\
&&-C\left[\int_{\mathbb{R}^d\times\mathbb{T}^n}[|\nabla_{x,y}u|^2+|u|^2] d xdy+\int_{\mathbb{R}^d\times\mathbb{T}^n} F_1(u) d xdy\right]^\vartheta\\
&\geq &\inf\limits_{t>0}\left(C_1 t^2-C_2 t^\vartheta\right)=: C(\vartheta)>-\infty
 \end{eqnarray*}
for all $u\in H^1(\mathbb{R}^d\times\mathbb{T}^n)$ such that $\|u\|_{L_{x,y}^2}=\Theta$.
This in turn implies
 \begin{eqnarray*}
m_{\Theta, \mu}-\frac{\mu-1}{2}\left\|\nabla_y u_\mu\right\|_2^2 &=&\mathcal{I}_{\mu}\left(u_\mu\right)-\frac{\mu-1}{2}\left\|\nabla_y u_\mu\right\|_2^2   \geq C(\vartheta).
 \end{eqnarray*}
Now taking $\mu \rightarrow \infty$ we infer the contradiction $(2 \pi)^{n} \widetilde{m}_{\frac{\Theta}{(2 \pi)^n}} \geq m_{\Theta, \mu} \rightarrow \infty$.
\end{proof}
\subsection{Dependency analysis}
From Lemmas \ref{L3.1} and \ref{L3.2}, there are two case.

 Case 1, it follows that $m_{\Theta, \mu}<(2 \pi)^{n} \widetilde{m}_{\frac{\Theta}{(2\pi)^{\frac{n}{2}}}}$ for all $\mu\in(0,+\infty)$, where we use the fact that $\mu\rightarrow m_{\Theta, \mu}$ is monotone increasing. Hence, the minimizer of
$m_{\Theta, \mu}$ has nontrivial $y$-dependence.

Case 2, there exists a $\mu_*$ such that  $m_{\Theta, \mu}=(2 \pi)^{n} \widetilde{m}_{\frac{\Theta}{(2\pi)^{\frac{n}{2}}}}$ for all $\mu\in[\mu_*,+\infty)$. In this case, we have an important observation for $\mathcal{I}_{\mu}(u)$.
\begin{lemma}\label{L3.3}
Let $u_\mu$ be an optimizer of $m_{\Theta, \mu}$. Then we have
\begin{equation*}
  \lim\limits_{\mu\rightarrow\infty}\mu\int_{\mathbb{R}^d\times\mathbb{T}^n}|\nabla_yu_{\mu}|^2dxdy=0.
\end{equation*}
\end{lemma}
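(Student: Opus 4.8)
Throughout write $\widetilde{m}$ for $\widetilde{m}_{\Theta/(2\pi)^{n/2}}$. The plan is to exploit the exact splitting $\mathcal{I}_\mu(u)=\mathcal{I}(u)+\frac{\mu-1}{2}\|\nabla_y u\|_2^2$, which is immediate from comparing the definitions of $\mathcal{I}$ and $\mathcal{I}_\mu$. Since we are in Case 2 we have $m_{\Theta,\mu}=(2\pi)^n\widetilde{m}$ for all $\mu\ge\mu_*$, so evaluating at the optimizer $u_\mu$ yields
\begin{equation*}
\frac{\mu-1}{2}\|\nabla_y u_\mu\|_2^2=(2\pi)^n\widetilde{m}-\mathcal{I}(u_\mu).
\end{equation*}
As $\mu\ge1$, the left side is nonnegative, so $\mathcal{I}(u_\mu)\le(2\pi)^n\widetilde{m}$. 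Because $\|\nabla_y u_\mu\|_2^2\to0$ by \eqref{eq3.2}, proving $\mu\|\nabla_y u_\mu\|_2^2\to0$ is equivalent to proving $(\mu-1)\|\nabla_y u_\mu\|_2^2\to0$, hence to the single lower estimate $\liminf_{\mu\to\infty}\mathcal{I}(u_\mu)\ge(2\pi)^n\widetilde{m}$. The whole argument thus reduces to this lower bound for the $\mu=1$ energy evaluated along the optimizers.

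To produce a $y$-independent competitor I would fiber-average: set $\bar u_\mu(x)=\frac{1}{(2\pi)^n}\int_{\mathbb{T}^n}u_\mu(x,y)\,dy$. The Poincar\'e inequality on $\mathbb{T}^n$ (whose first nonzero eigenvalue is $1$) gives $\|u_\mu-\bar u_\mu\|_2^2\le\|\nabla_y u_\mu\|_2^2=o(1)$, so by orthogonality the mass $\Theta_\mu^2:=\|\bar u_\mu\|_2^2=\Theta^2-\|u_\mu-\bar u_\mu\|_2^2\to\Theta^2$. From Lemma \ref{L2.2}, $u_\mu$ (hence $\bar u_\mu$) is bounded in $H^1$ uniformly in $\mu\ge1$, and interpolating the $L^2$-decay against this bound yields $\|u_\mu-\bar u_\mu\|_{L^p}\to0$ for every $p\in(2,2^*)$. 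The target then becomes $\mathcal{I}(u_\mu)\ge\mathcal{I}(\bar u_\mu)-o(1)$, after which one only needs that $\bar u_\mu$, being $y$-independent with mass $\Theta_\mu$, satisfies $\mathcal{I}(\bar u_\mu)\ge(2\pi)^n\widetilde{m}_{\Theta_\mu/(2\pi)^{n/2}}$, together with the convergence $\widetilde{m}_{\Theta_\mu/(2\pi)^{n/2}}\to\widetilde{m}$. The latter follows from the strict monotonicity of $s\mapsto s^{-2}\widetilde{m}_s$ of Lemma \ref{L2.4}, which gives $\widetilde{m}_{\Theta_\mu/(2\pi)^{n/2}}\ge(\Theta_\mu/\Theta)^2\,\widetilde{m}\to\widetilde{m}$.

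The main obstacle, and precisely the reason the scaling method fails here, is the comparison of the logarithmic terms in $\mathcal{I}(u_\mu)$ and $\mathcal{I}(\bar u_\mu)$, since $s\mapsto s^2\log s^2$ is neither convex nor weakly continuous. I would resolve this through the decomposition \eqref{eq1.2}, writing $\mathcal{I}$ with $+\int F_1(u)-\int F_2(u)$. For the convex piece, $F_1$ is convex for $\delta$ small by $(f_1)$, so Jensen's inequality applied in $y$ gives $\int F_1(\bar u_\mu)\le\int F_1(u_\mu)$; for the remainder, $(f_2)$ provides $|F_2'(s)|\le C|s|^{p-1}$, whence the mean value theorem and H\"older yield $\bigl|\int F_2(u_\mu)-\int F_2(\bar u_\mu)\bigr|\le C(\|u_\mu\|_p+\|\bar u_\mu\|_p)^{p-1}\|u_\mu-\bar u_\mu\|_p\to0$. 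Combining these with $\|\nabla_x\bar u_\mu\|_2\le\|\nabla_x u_\mu\|_2$, $\|\bar u_\mu\|_2\le\|u_\mu\|_2$, and $\|\nabla_y u_\mu\|_2^2\ge0$ delivers the desired $\mathcal{I}(u_\mu)\ge\mathcal{I}(\bar u_\mu)-o(1)$.

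Assembling the pieces gives $\liminf_{\mu\to\infty}\mathcal{I}(u_\mu)\ge(2\pi)^n\widetilde{m}$, which with the trivial upper bound forces $\mathcal{I}(u_\mu)\to(2\pi)^n\widetilde{m}$; inserting this into the opening identity gives $(\mu-1)\|\nabla_y u_\mu\|_2^2\to0$, and adding $\|\nabla_y u_\mu\|_2^2\to0$ yields the claim $\mu\|\nabla_y u_\mu\|_2^2\to0$. I expect the only delicate bookkeeping to be the uniform-in-$\mu$ $H^1$ bound and the one-sided continuity of $\widetilde{m}$ at $\Theta$, both of which are routine consequences of Lemmas \ref{L2.2} and \ref{L2.4}.
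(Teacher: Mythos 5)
Your proposal is correct, but it follows a genuinely different route from the paper's. The paper argues by contradiction: assuming $\mu\|\nabla_y u_\mu\|_2^2\to\iota\neq 0$, it slices the energy fiberwise in $y$, writing $m_{\Theta,\mu}=\int_{\mathbb{T}^n}\bigl[\tfrac12\int_{\mathbb{R}^d}(|\nabla_x u_\mu|^2+|u_\mu|^2)\,dx-\tfrac12\int_{\mathbb{R}^d}u_\mu^2\log u_\mu^2\,dx\bigr]dy+\tfrac{\mu}{2}\|\nabla_y u_\mu\|_2^2$, bounds the fiber integral below by $(2\pi)^n\widetilde{m}_{\frac{\Theta}{(2\pi)^{n/2}}}$ --- this is precisely where the extra hypothesis that $y\mapsto\widetilde{m}_\Theta(y)$ is non-decreasing is invoked, since the slice masses $\|u_\mu(\cdot,y)\|_{L_x^2}$ vary in $y$ and the pointwise bound $\widetilde{\mathcal{I}}(u_\mu(\cdot,y))\ge\widetilde{m}_{\|u_\mu(\cdot,y)\|_{L_x^2}}$ does not by itself integrate to the claimed constant --- and then the Case 2 equality $m_{\Theta,\mu}=(2\pi)^n\widetilde{m}_{\frac{\Theta}{(2\pi)^{n/2}}}$ forces $\tfrac{\mu}{2}\|\nabla_y u_\mu\|_2^2\le 0$, a contradiction. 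You instead prove the quantitative lower bound $\liminf_{\mu\to\infty}\mathcal{I}(u_\mu)\ge(2\pi)^n\widetilde{m}_{\frac{\Theta}{(2\pi)^{n/2}}}$ directly, by comparing $u_\mu$ with its fiber average $\bar u_\mu$: Poincar\'e on $\mathbb{T}^n$ together with \eqref{eq3.2} gives $\|u_\mu-\bar u_\mu\|_2\to0$, Jensen handles the convex piece $F_1$ of the decomposition \eqref{eq1.2}, the growth bound $(f_2)$ plus interpolation handles $F_2$, and the monotonicity of Lemma \ref{L2.4} (transferred verbatim to $\widetilde{m}$ on $\mathbb{R}^d$, which you should state explicitly since the lemma is only proved on the product) controls the mass defect $\Theta_\mu\to\Theta$; feeding this into the exact identity $\mathcal{I}_\mu=\mathcal{I}+\tfrac{\mu-1}{2}\|\nabla_y\cdot\|_2^2$ yields the claim. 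Your route buys three things: it dispenses entirely with the paper's vaguely stated monotonicity hypothesis (your only inputs are the Case 2 equality, \eqref{eq3.2}, and Lemmas \ref{L2.2} and \ref{L2.4}); it repairs the delicate point in the paper's fiberwise lower bound just described; and it delivers the extra information $\mathcal{I}(u_\mu)\to(2\pi)^n\widetilde{m}_{\frac{\Theta}{(2\pi)^{n/2}}}$ and $\|u_\mu-\bar u_\mu\|_{L^p}\to0$ for $p\in(2,2^*)$, which is useful later (Lemma \ref{L3.5}). What the paper's route buys, when its hypothesis is granted, is brevity, and in fact the stronger pointwise-in-$\mu$ conclusion $\nabla_y u_\mu\equiv 0$ for every $\mu$ at which the Case 2 equality holds, rather than only the limiting statement.
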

\begin{proof}
According to Lemma \ref{L3.2}, we know that
\begin{equation*}
  \lim\limits_{\mu\rightarrow\infty}\int_{\mathbb{R}^d\times\mathbb{T}^n}|\nabla_yu_{\mu}|^2dxdy=0.
\end{equation*}
Hence, we can assume that $\lim\limits_{\mu\rightarrow\infty}\mu\|\nabla_y u_{\mu}\|=\iota$. If $\iota=0$, then this lemma holds. If $\iota\neq0$, by using the fact that $y\rightarrow\widetilde{m}_\Theta(y)$ is non-decreasing, we know that
\begin{eqnarray*}
m_{\Theta, \mu}&=&\frac{1}{2} \int_{\mathbb{R}^d\times\mathbb{T}^n}[|\nabla_{x}u_{\mu}|^2+|u_{\mu}|^2] d xdy +\frac{\mu}{2} \int_{\mathbb{R}^d\times\mathbb{T}^n}|\nabla_yu_{\mu}|^2dxdy-\frac{1}{2} \int_{\mathbb{R}^d\times\mathbb{T}^n}u_{\mu}^{2}\log u_{\mu}^2 d xdy \\
&=&\int_{\mathbb{T}^n}\left[\frac{1}{2} \int_{\mathbb{R}^d}[|\nabla_{x}u_{\mu}|^2+|u_{\mu}|^2] d x -\frac{1}{2} \int_{\mathbb{R}^d}u_{\mu}^{2}\log u_{\mu}^2 d x\right]dy +\frac{\mu}{2} \int_{\mathbb{R}^d\times\mathbb{T}^n}|\nabla_yu_{\mu}|^2dxdy\\
    &\geq& (2 \pi)^{n} \widetilde{m}_{\frac{\Theta}{(2\pi)^{\frac{n}{2}}}}+ \frac{\mu}{2} \int_{\mathbb{R}^d\times\mathbb{T}^n}|\nabla_yu_{\mu}|^2dxdy,
\end{eqnarray*}
which is absurd because of $(2 \pi)^{n} \widetilde{m}_{\frac{\Theta}{(2\pi)^{\frac{n}{2}}}}=m_{\Theta, \mu}$.
\end{proof}
By using Lemma \ref{L3.3}, the minimizer of $m_{\Theta, \mu}$ may be $y$-independent when $\mu\geq\mu_*$.
\begin{lemma}\label{L3.4}
We have
\begin{equation}\label{eq3.4}
   \int_{\mathbb{R}^d\times\mathbb{T}^n}|\nabla_{x}u_{\mu}|^2dxdy=\frac{d}{2}\Theta^2
\end{equation}
for $\mu>0$. Then there exists $\lambda_{\mu}\in\mathbb{R}$ such that
\begin{equation}\label{eq3.5}
   -\Delta_{x}u_\mu-\mu\Delta_{y}u_\mu+\lambda_{\mu}u_\mu=u_\mu\log u_\mu^2.
\end{equation}
Moreover,
\begin{equation}\label{eq3.6}
  \lim\limits_{\mu\rightarrow\infty}\lambda_{\mu}=\overline{\lambda}.
\end{equation}
\end{lemma}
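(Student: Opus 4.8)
The plan is to establish the three assertions in the order they are stated, handling the Euler--Lagrange equation \eqref{eq3.5} first, then the gradient identity \eqref{eq3.4}, and finally the convergence \eqref{eq3.6}. For \eqref{eq3.5} I would argue exactly as in the proof of Theorem \ref{t1.1}: since $u_\mu$ minimizes $\mathcal{I}_\mu$ on the mass constraint $S_\Theta$, the Lagrange multiplier theorem for the sum of a $C^1$-functional and a convex one (in the framework of \cite{AS1986}) produces some $\lambda_\mu\in\mathbb{R}$ with $\mathcal{I}_\mu'(u_\mu)=\lambda_\mu\Psi'(u_\mu)$, where $\Psi(u)=\frac12\int u^2$. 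Because the $\mu$-weighted $y$-gradient enters $\mathcal{I}_\mu$ linearly, the associated elliptic operator is precisely $-\Delta_x-\mu\Delta_y$, which yields \eqref{eq3.5}.

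The core of the lemma is \eqref{eq3.4}, which I would obtain by a purely $x$-directional dilation that leaves both the mass and the $y$-gradient term invariant. Fix $\mu>0$ and set $u_t(x,y):=t^{d/2}u_\mu(tx,y)$ for $t>0$. The substitution $x'=tx$ shows $\|u_t\|_2=\Theta$, so $u_t\in S_\Theta$, and that $\|\nabla_y u_t\|_2^2=\|\nabla_y u_\mu\|_2^2$ is independent of $t$, while $\|\nabla_x u_t\|_2^2=t^2\|\nabla_x u_\mu\|_2^2$. For the logarithmic term the prefactor $t^{d/2}$ contributes only an additive constant, $\int u_t^2\log u_t^2 = \int u_\mu^2\log u_\mu^2 + d\,\Theta^2\log t$. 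Collecting the $t$-dependent pieces gives
\begin{equation*}
\mathcal{I}_\mu(u_t)=\frac{t^2}{2}\|\nabla_x u_\mu\|_2^2-\frac{d}{2}\Theta^2\log t + C_\mu,
\end{equation*}
where $C_\mu$ gathers the $t$-independent contributions; all these integrals are finite since $u_\mu\in X$ (so $\int u_\mu^2\log u_\mu^2=2\int F_2(u_\mu)-2\int F_1(u_\mu)$ is finite). As $u_\mu$ is a minimizer and $u_t\in S_\Theta$, the smooth map $t\mapsto\mathcal{I}_\mu(u_t)$ attains its minimum at $t=1$; setting its derivative there to zero gives $\|\nabla_x u_\mu\|_2^2=\frac{d}{2}\Theta^2$, which is \eqref{eq3.4}.

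For \eqref{eq3.6} I would derive a clean Nehari-type relation between $\lambda_\mu$ and the minimal energy. Testing \eqref{eq3.5} with $u_\mu$ yields $\|\nabla_x u_\mu\|_2^2+\mu\|\nabla_y u_\mu\|_2^2+\lambda_\mu\Theta^2=\int u_\mu^2\log u_\mu^2$. Substituting this into $m_{\Theta,\mu}=\mathcal{I}_\mu(u_\mu)$ makes the gradient and logarithmic terms cancel and leaves $m_{\Theta,\mu}=\frac{(1-\lambda_\mu)\Theta^2}{2}$, that is, $\lambda_\mu=1-\frac{2m_{\Theta,\mu}}{\Theta^2}$. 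The existence of $\lim_{\mu\to\infty}\lambda_\mu$ therefore reduces to that of $\lim_{\mu\to\infty}m_{\Theta,\mu}$. The latter holds because $\mu\mapsto m_{\Theta,\mu}$ is monotone increasing (as used already in Section 3.3) and is bounded above by $(2\pi)^n\widetilde{m}_{\Theta/(2\pi)^{n/2}}$ for every $\mu$, as shown in the proof of Lemma \ref{L3.2} via \eqref{eq3.1}; a bounded monotone function converges, so $\lim_{\mu\to\infty}m_{\Theta,\mu}=:\overline{m}$ exists and $\overline{\lambda}=1-2\overline{m}/\Theta^2$.

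The main obstacle I anticipate is the rigorous justification of the scaling step for \eqref{eq3.4}: one must check that $u_t$ genuinely stays in the Orlicz space $X$ for $t$ near $1$ (so the deformation is admissible) and that the logarithmic integral transforms exactly as claimed despite the $F_1/F_2$ splitting introduced to make $\mathcal{I}_\mu$ well-defined. Since $F_1(s)=O(s^2)$ as $|s|\to\infty$, the change of variables shows $\int F_1(u_t)<\infty$, so $u_t\in X\cap S_\Theta$ and the finite closed-form expression for $\mathcal{I}_\mu(u_t)$ is legitimate, after which differentiation in $t$ is elementary. The remaining two assertions are bookkeeping built on Theorem \ref{t1.1}, Lemma \ref{L3.2}, and the monotonicity of $m_{\Theta,\mu}$; note in particular that the cancellation of the gradient terms in the Nehari identity makes \eqref{eq3.6} independent of \eqref{eq3.4} and Lemma \ref{L3.3}.
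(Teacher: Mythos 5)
Your proposal is correct and follows essentially the same route as the paper: the $x$-only dilation $t^{d/2}u_\mu(tx,y)$ with vanishing $t$-derivative at $t=1$ for \eqref{eq3.4}, the Lagrange multiplier framework for \eqref{eq3.5}, and testing \eqref{eq3.5} with $u_\mu$ to express $\lambda_\mu$ affinely in $m_{\Theta,\mu}$ for \eqref{eq3.6}. You also supply two details the paper glosses over: an explicit reason why $\lim_{\mu\to\infty}m_{\Theta,\mu}$ exists (monotone in $\mu$ and bounded above by $(2\pi)^n\widetilde{m}_{\Theta/(2\pi)^{n/2}}$), and your identity $\lambda_\mu=1-\frac{2m_{\Theta,\mu}}{\Theta^2}$ is the correct one --- the paper's displayed $\lambda_\mu=-\frac{2}{\Theta^2}m_{\Theta,\mu}+2$ contains a harmless arithmetic slip, since the mass term $\frac{1}{\Theta^2}\|u_\mu\|_2^2$ contributes $1$, not $2$.
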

\begin{proof}
Note that, $u_{\mu}$ is a constrained minimizer for $\mathcal{I}_\mu(u)$  on the ball of size $\Theta$ in $L^2(\mathbb{R}^d\times\mathbb{T}^n)$, it follows that $\frac{d}{dt}\mathcal{I}_\mu(u_{\mu}^t)\big|_{t=1}=0$, where $u_{\mu}^t=t^{\frac{d}{2}}u_{\mu}(tx,y)$. Moreover, we have
\begin{eqnarray*}
  \frac{d}{dt}\mathcal{I}_\mu(u_{\mu}^t)\big|_{t=1} &=&\frac{d}{dt}\left[\frac{t^2}{2} \int_{\mathbb{R}^d\times\mathbb{T}^n}|\nabla_{x}u_{\mu}|^2dxdy+\frac{\Theta^2}{2}+\frac{\mu}{2} \int_{\mathbb{R}^d\times\mathbb{T}^n}|\nabla_yu_{\mu}|^2dxdy\right.\\
  &&\left.-\frac{1}{2} \int_{\mathbb{R}^d\times\mathbb{T}^n}u_{\mu}^{2}\log u_{\mu}^2 d xdy-\frac{d}{2} \int_{\mathbb{R}^d\times\mathbb{T}^n}u_{\mu}^{2}\log t d xdy\right]\Big|_{t=1} \\
    &=&\int_{\mathbb{R}^d\times\mathbb{T}^n}|\nabla_{x}u_{\mu}|^2dxdy-\frac{d}{2}\Theta^2 ,
\end{eqnarray*}
which implies \eqref{eq3.4} holds. \eqref{eq3.5} follows from Lagrange multiplier technique. Now, by using \eqref{eq3.5}, one has
\begin{equation*}
   \int_{\mathbb{R}^d\times\mathbb{T}^n}[|\nabla_{x}u_{\mu}|^2+\mu|\nabla_yu_{\mu}|^2] d xdy +\lambda_{\mu}\|u_{\mu}\|_2^2= \int_{\mathbb{R}^d\times\mathbb{T}^n}u_{\mu}^{2}\log u_{\mu}^2 d xdy.
\end{equation*}
Hence,
\begin{eqnarray*}
\lambda_{\mu}&=&\frac{1}{\Theta^2}\left[-\int_{\mathbb{R}^d\times\mathbb{T}^n}[|\nabla_{x}u_{\mu}|^2+\mu|\nabla_yu_{\mu}|^2] d xdy+\int_{\mathbb{R}^d\times\mathbb{T}^n}u_{\mu}^{2}\log u_{\mu}^2 d xdy\right] \\
&=&-\frac{2}{\Theta^2}\left[\frac{1}{2} \int_{\mathbb{R}^d\times\mathbb{T}^n}[|\nabla_{x}u_{\mu}|^2+|u_{\mu}|^2+\mu|\nabla_yu_{\mu}|^2] d xdy -\frac{1}{2} \int_{\mathbb{R}^d\times\mathbb{T}^n}u_{\mu}^{2}\log u_{\mu}^2 d xdy\right]+2\\
    &=&-\frac{2}{\Theta^2}m_{\Theta, \mu}+2,
\end{eqnarray*}
which implies \eqref{eq3.6} holds.
\end{proof}
\begin{lemma}\label{L3.5}
There exists some $v \in \widetilde{S}_{\frac{\Theta}{(2\pi )^n }}$ such that, up to a subsequence and
$\mathbb{R}^d$-translations, $u_\mu$ converges strongly in $X(\mathbb{R}_x^d)$ to $v$ as $\mu \rightarrow \infty ,  \widetilde{\mathcal{I}}(v) = \widetilde{m}_{\frac{\Theta}{(2\pi )^n }}$, and
$v$ satisfies
\begin{equation}\label{eq3.7}
  -\Delta_xv+\overline{\lambda}v=v\log v^2.
\end{equation}
\end{lemma}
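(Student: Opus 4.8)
The plan is to extract the limiting profile of $(u_\mu)$ as $\mu\to\infty$, identify it as a minimizer of the reduced problem $\widetilde{m}_{\frac{\Theta}{(2\pi)^n}}$ on $\mathbb{R}^d_x$, and then pass to the limit in the Euler--Lagrange equation \eqref{eq3.5}. First I would record that $(u_\mu)$ is bounded in $H^1(\mathbb{R}^d\times\mathbb{T}^n)$: the mass is fixed, \eqref{eq3.4} pins $\|\nabla_x u_\mu\|_2^2=\frac{d}{2}\Theta^2$, and \eqref{eq3.2} gives $\|\nabla_y u_\mu\|_2\to 0$; since the $\mu$-weighted term is nonnegative and $\frac{\mu}{2}\|\nabla_y u_\mu\|_2^2\to 0$ by Lemma \ref{L3.3}, the coercivity estimate used in the proof of Lemma \ref{L3.2} also bounds $\int F_1(u_\mu)$, hence $(u_\mu)$ is bounded in $X$. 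Next I would split $u_\mu=v_\mu+r_\mu$, where $v_\mu(x):=(2\pi)^{-n}\int_{\mathbb{T}^n}u_\mu(x,y)\,dy$ is the fibrewise $y$-average and $r_\mu$ has zero $y$-mean. The Poincar\'e inequality on $\mathbb{T}^n$, integrated in $x$, yields $\|r_\mu\|_{L^2}^2\le C\|\nabla_y u_\mu\|_{L^2}^2\to 0$, and the same bound controls $\nabla_x r_\mu$, so $r_\mu\to 0$ in $H^1$. Using subcriticality of $F_2$ from $(f_2)$, the monotonicity/convexity in $(f_1)$, and the Br\'ezis--Lieb-type Lemma \ref{L2.5}, I would upgrade this to $r_\mu\to 0$ in $X$ and $\int u_\mu^2\log u_\mu^2-\int v_\mu^2\log v_\mu^2\to 0$.

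Second, I would argue that $(v_\mu)$ is an asymptotically admissible minimizing sequence for $\widetilde{m}_{\frac{\Theta}{(2\pi)^n}}$. In Case 2 we have $\mathcal{I}_\mu(u_\mu)=m_{\Theta,\mu}=(2\pi)^n\widetilde{m}_{\frac{\Theta}{(2\pi)^n}}$, and since Lemma \ref{L3.3} forces the $\mu$-weighted term to vanish, the previous paragraph gives $(2\pi)^n\widetilde{\mathcal{I}}(v_\mu)\to(2\pi)^n\widetilde{m}_{\frac{\Theta}{(2\pi)^n}}$. Because $\|v_\mu\|_{L^2(\mathbb{R}^d)}^2\to\Theta^2/(2\pi)^n$, after a harmless mass-normalization $(v_\mu)$ is an admissible minimizing sequence for the reduced problem on $\mathbb{R}^d_x$.

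Third comes the compactness on $\mathbb{R}^d$. The reduced functional $\widetilde{\mathcal{I}}$ obeys the strict subadditivity of Lemma \ref{L2.4} (equivalently, $\Theta\mapsto\Theta^{-2}\widetilde{m}_\Theta$ is strictly decreasing), so I would run the concentration-compactness dichotomy of Lemma \ref{L2.6} verbatim, now on $\mathbb{R}^d_x$: vanishing is excluded because $\widetilde{m}<0$, and splitting is excluded by strict subadditivity. This produces translations $(z_\mu)\subset\mathbb{R}^d$ and a function $v\in\widetilde{S}_{\frac{\Theta}{(2\pi)^n}}$ with $v_\mu(\cdot+z_\mu)\to v$ strongly in $X(\mathbb{R}^d_x)$ and $\widetilde{\mathcal{I}}(v)=\widetilde{m}_{\frac{\Theta}{(2\pi)^n}}$; combined with $r_\mu\to 0$, this gives $u_\mu(\cdot+z_\mu,\cdot)\to v$ in $X(\mathbb{R}^d_x)$.

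Finally, I would test \eqref{eq3.5} against a $y$-independent $\phi\in C_c^\infty(\mathbb{R}^d)$; the term $\mu\int\Delta_y u_\mu\,\phi$ vanishes identically, leaving $\int_{\mathbb{R}^d\times\mathbb{T}^n}[\nabla_x u_\mu\cdot\nabla_x\phi+\lambda_\mu u_\mu\phi-u_\mu\log u_\mu^2\,\phi]=0$. Using the strong $X$-convergence $u_\mu\to v$ together with $\lambda_\mu\to\overline{\lambda}$ from \eqref{eq3.6}, I would pass to the limit and recover the weak form of \eqref{eq3.7}. The crux of the whole argument is controlling the logarithmic nonlinearity simultaneously in two places: when establishing strong $X$-convergence and when taking the limit of $\int u_\mu\log u_\mu^2\,\phi$. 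I expect the hardest point to be the uniform integrability of the singular part $-u^2\log u^2$ near the zero set, which is precisely what the $F_1$/$F_2$ splitting and Lemma \ref{L2.5} are designed to handle: once strong $X$-convergence is in hand, the subcritical $F_2$-part passes to the limit by $(f_2)$ and dominated convergence, while the convex $F_1$-part is handled by its lower semicontinuity together with the exact matching of energy levels.
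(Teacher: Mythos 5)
There is a genuine gap at the heart of your decomposition step. Writing $u_\mu=v_\mu+r_\mu$ with $v_\mu$ the fibrewise $y$-average, the Poincar\'e--Wirtinger inequality on $\mathbb{T}^n$ does give $\|r_\mu\|_{L^2}\le C\|\nabla_y u_\mu\|_{L^2}\to 0$, but your claim that ``the same bound controls $\nabla_x r_\mu$'' is false: applying Poincar\'e fibrewise to $\nabla_x r_\mu(x,\cdot)$ (which has zero $y$-mean) yields $\|\nabla_x r_\mu\|_{L^2}\le C\|\nabla_y\nabla_x u_\mu\|_{L^2}$, a \emph{mixed second derivative} that nothing in the available estimates controls --- \eqref{eq3.2} and Lemma \ref{L3.3} only bound $\|\nabla_y u_\mu\|_{L^2}$, and elliptic regularity for \eqref{eq3.5} degenerates as $\mu\to\infty$ while the logarithmic nonlinearity is not Lipschitz at the origin. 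Without $\nabla_x r_\mu\to 0$ in $L^2$, the orthogonal splitting $\|\nabla_x u_\mu\|_{L^2}^2=(2\pi)^n\|\nabla_x v_\mu\|_{L^2_x}^2+\|\nabla_x r_\mu\|_{L^2}^2$ only gives an inequality in the wrong direction, so your second paragraph --- that $(v_\mu)$ is an asymptotically minimizing sequence for the reduced problem --- is not established, and the concentration-compactness step in your third paragraph has nothing to act on. A second, related defect: Lemma \ref{L2.5} is a Br\'ezis--Lieb statement for a sequence converging a.e.\ to a \emph{fixed} limit $u$; you invoke it for the splitting $u_\mu=v_\mu+r_\mu$ in which both pieces vary with $\mu$, which is outside its scope, and the $F_1$-part of the log term (the singular part near the zero set) is precisely where this matters.

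For contrast, the paper avoids the averaging decomposition entirely: it extracts a weak limit $w\neq 0$ of $u_{\mu_j}(\cdot+\tau_j,\cdot)$ in $X$ (nontriviality coming from the Case 2 energy identity and $\widetilde{m}<0$), obtains $\nabla_y w=0$ from \eqref{eq3.2} and weak lower semicontinuity, passes to the limit in \eqref{eq3.5} distributionally using $\lambda_\mu\to\overline{\lambda}$ from \eqref{eq3.6}, rules out mass loss $\|w\|_{L^2_x}<\frac{\Theta}{(2\pi)^{n/2}}$ by the strict-subadditivity argument of Lemma \ref{L2.6}, and only \emph{then} upgrades to strong convergence via norm convergence of the gradients. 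Your final step of testing \eqref{eq3.5} against $y$-independent $\phi\in C_c^\infty(\mathbb{R}^d)$, so that the $\mu$-term drops identically, is a nice observation and sound as far as it goes; but as written it presupposes the strong convergence your earlier steps failed to deliver. The natural repair is to retreat to what you actually have --- $r_\mu\to 0$ in $L^2$, hence in $L^p$ for $2\le p<2^*$ by interpolation with the $H^1$ bound, with only weak convergence of gradients --- and recover strong gradient convergence at the very end from the energy matching $m_{\Theta,\mu}=(2\pi)^n\widetilde{m}_{\frac{\Theta}{(2\pi)^{n/2}}}$ plus weak lower semicontinuity, which is essentially the paper's route.
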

\begin{proof}
According to the proof of Lemma \ref{L2.2} and $\left\|u_{\mu}\right\|_{L_{x, y}^2}=\Theta$, we deduce that $u_{\mu}$ is bounded in $X$. Moreover, by combining the assumption of Case 2 with the fact that $ \widetilde{m}_{\frac{\Theta}{(2\pi)^{\frac{n}{2}}}}<0$(similar to Lemma \ref{L2.3}),  we get the existence (up to subsequence) of $\tau_j \in \mathbb{R}_x^d$ such that
$$
u_{\mu_j}\left(x+\tau_j, y\right) \rightharpoonup w \neq 0 \ \text { in } X.
$$
Additionally, the weak convergence of $u_\mu$ to $v$ in $X$, the Sobolev embedding, \eqref{eq3.2}, \eqref{eq3.5}, and $\lim\limits_{\mu\rightarrow\infty}\lambda_{\mu}=\overline{\lambda}$ also yield \eqref{eq3.7}. Using the proof of Theorem \ref{t1.1}, we can assume that
$$
w(x, y) \geq 0 \quad \text {a.e. in }(x, y) \in \mathbb{R}^d\times\mathbb{T}^n.
$$
It follows from Lemma \ref{L3.3} that $\nabla_y w=0$. In particular $w$ is $y$-independent.

Taking the limit in \eqref{eq3.5} in the distribution sense, we obtain:
\begin{equation}\label{eq3.8}
-\Delta_x w+ \overline{\lambda}w=w\log w^2 \text { in } \mathbb{R}_x^d, \ w(x) > 0, w \neq 0
\end{equation}
by using Lemma \ref{L3.3} and \eqref{eq3.6}. Now, we prove that $\|w\|_{L_x^2}=\frac{\Theta}{(2\pi)^{\frac{n}{2}}}$. Indeed, we can assume $\|w\|_{L_x^2}=\sigma<\frac{\Theta}{(2\pi)^{\frac{n}{2}}}$. Using $w$ is the solution of \eqref{eq3.8}, similar to the proof of Lemma \ref{L2.6}, we obtain contradiction. Hence,  $\sigma=\frac{\Theta}{(2\pi)^{\frac{n}{2}}}$.

Now, we prove that $u_{\mu_j}\left(x+\tau_j, y\right)$ converges strongly to $w$ in $X$. In fact, by Lemma \ref{L3.3} and $\nabla_y w=0$, one has
$$
\lim _{j \rightarrow \infty}\left\|\nabla_y u_{\mu_j}\left(x+\tau_j, y\right)\right\|_{L_{x, y}^2}=0=\left\|\nabla_y w\right\|_{L_{x, y}^2}.
$$
Hence, it is sufficient to obtain that
$$
\lim _{j \rightarrow \infty}\left\|\nabla_x u_{\mu_j}\left(x+\tau_j, y\right)\right\|_{L_{x, y}^2}=(2\pi)^{\frac{n}{2}}\left\|\nabla_x w\right\|_{L_x^2}=\left\|\nabla_x w\right\|_{L_{x, y}^2}.
$$
Similar to the proof of Lemma \ref{L2.6}, the fact follows by combining $\|w\|_{L_x^2}=\frac{\Theta}{(2\pi)^{\frac{n}{2}}}$ and \eqref{eq3.8}.

\end{proof}
\begin{lemma}\label{L3.6}
There exists $\mu_{**}$ such that $\nabla_y u_{\mu_j}=0$ for all $\mu>\mu_{**}$.
\end{lemma}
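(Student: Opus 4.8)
The plan is to split $u_\mu$ into its $y$-average and its oscillating part and to play the spectral gap of $-\Delta_y$ on $\mathbb{T}^n$ against the largeness of the coefficient $\mu$. Write
\begin{equation*}
\overline{u}_\mu(x):=\frac{1}{(2\pi)^n}\int_{\mathbb{T}^n}u_\mu(x,y)\,dy,\qquad g_\mu:=u_\mu-\overline{u}_\mu,
\end{equation*}
so that $\int_{\mathbb{T}^n}g_\mu\,dy=0$ for a.e. $x$, $\nabla_y u_\mu=\nabla_y g_\mu$, and the claim $\nabla_y u_\mu=0$ is exactly $g_\mu\equiv 0$. Since the first nonzero eigenvalue of $-\Delta_y$ on $\mathbb{T}^n$ equals $1$, functions of zero $y$-mean obey the Poincar\'e inequality $\|\nabla_y g_\mu\|_2^2\ge\|g_\mu\|_2^2$; combined with Lemma \ref{L3.2} this already gives $\|g_\mu\|_2\to 0$ as $\mu\to\infty$, and from the proof of Lemma \ref{L3.5} the family $(u_\mu)$, hence $(\overline{u}_\mu)$ and $(g_\mu)$, is bounded in $X$.

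First I would derive an Euler--Lagrange identity for $g_\mu$. Set $f(s):=s\log s^2$. Averaging \eqref{eq3.5} over $\mathbb{T}^n$ gives $-\Delta_x\overline{u}_\mu+\lambda_\mu\overline{u}_\mu=\overline{f(u_\mu)}$ (the bar denoting the $y$-average), and subtracting this from \eqref{eq3.5}, using $\Delta_y\overline{u}_\mu=0$, yields
\begin{equation*}
-\Delta_x g_\mu-\mu\Delta_y g_\mu+\lambda_\mu g_\mu=f(u_\mu)-\overline{f(u_\mu)}.
\end{equation*}
Testing with $g_\mu$ and using that $g_\mu$ has zero $y$-mean, so that its pairing with the $y$-independent functions $\overline{f(u_\mu)}$ and $f(\overline{u}_\mu)$ vanishes, produces the key identity
\begin{equation*}
\|\nabla_x g_\mu\|_2^2+\mu\|\nabla_y g_\mu\|_2^2+\lambda_\mu\|g_\mu\|_2^2=\int_{\mathbb{R}^d\times\mathbb{T}^n}g_\mu\big(f(u_\mu)-f(\overline{u}_\mu)\big)\,dxdy.
\end{equation*}

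The conclusion then follows by comparing the two sides. On the left, discarding the nonnegative terms $\|\nabla_x g_\mu\|_2^2$ and $\lambda_\mu\|g_\mu\|_2^2$ (recall $\lambda_\mu>1$ by Lemma \ref{L3.4}) and applying Poincar\'e gives the lower bound $\mu\|g_\mu\|_2^2$. On the right I would write, via the fundamental theorem of calculus, $f(u_\mu)-f(\overline{u}_\mu)=g_\mu\int_0^1 f'\big((1-t)\overline{u}_\mu+t u_\mu\big)\,dt$ with $f'(s)=\log s^2+2$ (the $t$-integral converging absolutely, since the logarithmic singularity is integrable), so the right-hand side equals $\int g_\mu^2\int_0^1\!\big(\log((1-t)\overline{u}_\mu+t u_\mu)^2+2\big)\,dt\,dxdy$. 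Granting a uniform-in-$\mu$ bound $\|u_\mu\|_\infty\le M$, the argument $(1-t)\overline{u}_\mu+t u_\mu$ is a convex combination of quantities bounded by $M$, hence $f'(\cdot)\le \log^+(M^2)+2=:C$ with $C$ independent of $\mu$, and the right-hand side is $\le C\|g_\mu\|_2^2$. Thus $\mu\|g_\mu\|_2^2\le C\|g_\mu\|_2^2$, and choosing $\mu_{**}:=C$ forces $\|g_\mu\|_2=0$, i.e. $\nabla_y u_\mu=0$, for every $\mu>\mu_{**}$.

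The hard part is the uniform $L^\infty$ bound, which is the only nontrivial input: it is exactly what tames the unbounded factor $f'(s)=\log s^2+2$. I would obtain it by elliptic regularity applied to \eqref{eq3.5}, noting that the nonlinearity is Sobolev-subcritical, that $\lambda_\mu\to\overline{\lambda}$ stays bounded and the mass is fixed, and that the coefficient $\mu\ge 1$ in front of $-\Delta_y$ only strengthens the coercivity of the operator, so a Moser iteration yields a bound uniform for large $\mu$. Alternatively, if one wishes to bypass $L^\infty$ entirely, the same estimate can be salvaged by discarding the region $\{|(1-t)\overline{u}_\mu+t u_\mu|\le 1\}$, where the logarithm has the favorable negative sign, and using on the complement that $\log^+(s^2)\lesssim_\eta |s|^\eta$ for arbitrarily small $\eta>0$ together with the uniform $H^1$-bound (Lemma \ref{L2.1}) and $\|g_\mu\|_2\to 0$; but the convex-combination argument above is by far the cleanest, so securing the uniform $L^\infty$ estimate is where I would concentrate the effort.
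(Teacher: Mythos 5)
Your argument is correct in substance but follows a genuinely different route from the paper's. The paper implements the Terracini--Tzvetkov--Visciglia scheme: it applies $\sqrt{-\Delta_y}$ to \eqref{eq3.5}, pairs with $w_\mu=\sqrt{-\Delta_y}\,u_\mu$, and splits the resulting quadratic form into $I_\mu+II_\mu+III_\mu$; since the zero mode is absent from the expansion \eqref{eq3.10}, the spectral gap of $-\Delta_y$ multiplied by $(\mu-1)$ dominates a term of size $C(\|v\|_{L^\infty})$ in $I_\mu$, the term $III_\mu$ vanishes by \eqref{eq3.6}, and the nonlinear contribution in $II_\mu$ is controlled by linearizing the logarithm around the limit profile $v$ furnished by Lemma \ref{L3.5}, via the equivalence \eqref{eq3.9}, H\"older and Sobolev. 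You instead test the equation satisfied by the oscillating part $g_\mu=u_\mu-\overline{u}_\mu$ with $g_\mu$ itself and replace the linearization around $v$ by the mean value theorem for $f(s)=s\log s^2$, the spectral gap entering as the Poincar\'e inequality for functions of zero $y$-mean. What this buys: you need neither the limit object $v$ and the compactness of Lemma \ref{L3.5}, nor Lemma \ref{L3.3}, nor the operator $\sqrt{-\Delta_y}$ at all; moreover your one-sided bound $f'(s)=\log s^2+2\le \log M^2+2$ for $0<|s|\le M$ has exactly the right sign, since the singularity of $f'$ at the origin only helps (so sign changes of the convex combination are harmless, and the FTC representation is legitimate because $f'$ is locally integrable). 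What it costs: the uniform-in-$\mu$ bound $\|u_\mu\|_{L^\infty}\le M$, which you correctly identify as the crux. This is the one substantive step you have not carried out, and it is not in the paper either---the paper only invokes $v\in L_x^\infty$ for the \emph{limit} via the Alves--Ji regularity theory. Your Moser-iteration sketch is credible (testing with powers of $u_\mu$, the coefficient $\mu\ge 1$ in front of $-\Delta_y$ only strengthens coercivity, $\lambda_\mu$ stays bounded by \eqref{eq3.6}, and the nonlinearity is subcritical), but until it is written out your proof is incomplete at exactly that point. In fairness, the paper's own estimate of $II_\mu$ rests on the pointwise inequality $|\log v-\log u_\mu|\le |v-u_\mu|/|v|$, which likewise requires quantitative $L^\infty$ and lower-bound control of the solutions; so the analytical burden of your version is comparable to, and its bookkeeping lighter than, the published proof. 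One small point to settle in a write-up: justify the pairing $\int g_\mu\, u_\mu\log u_\mu^2\,dxdy$ within the $X$-framework (convexity of $F_1$ and Jensen's inequality give $\overline{u}_\mu\in L^{F_1}$, hence $g_\mu\in X$).
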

\begin{proof}
Under the stimulation of \cite{STNT2014}. We introduce $w_\mu=\sqrt{-\Delta_y} u_{\mu}$, note that for all $p \in(1, \infty)$ there exist $c(p), C(p)>0$ such that
\begin{equation}\label{eq3.9}
  c(p)\left\|\sqrt{-\Delta_y} f\right\|_{L_y^p} \leq\left\|\nabla_y f\right\|_{L_y^p} \leq C(p)\left\|\sqrt{-\Delta_y} f\right\|_{L_y^p} .
\end{equation}
Applying $\sqrt{-\Delta_y}$ to \eqref{eq3.5}, one has
$$
-\mu \Delta_y w_\mu-\Delta_x w_\mu+\lambda_\mu w_\mu=\sqrt{-\Delta_y}\left(u_{\mu}\log u_{\mu}^2 \right).
$$
After multiplication by $w_\mu$, we know
$$
\int_{\mathbb{R}^d\times\mathbb{T}^n}  \left[\mu\left|\nabla_y w_\mu\right|^2+\left|\nabla_x w_\mu\right|^2+\lambda_\mu\left|w_\mu\right|^2-\sqrt{-\Delta_y}\left(u_{\mu}\log u_{\mu}^2 \right)w_\mu\right] d x dy=0.
$$
By direct calculation,
\begin{eqnarray*}
0&= & \int_{\mathbb{R}^d\times\mathbb{T}^n}\left[\left(\mu-1\right)\left|\nabla_y w_\mu\right|^2- \sqrt{-\Delta_y}u_{\mu} (\log v^2)w_\mu -2\sqrt{-\Delta_y}u_{\mu}w_\mu \right]d x d y \\
&& +\int_{\mathbb{R}^d\times\mathbb{T}^n}\left[\left|\nabla_y w_\mu\right|^2+\left|\nabla_x w_\mu\right|^2+\overline{\lambda}\left|w_\mu\right|^2+\sqrt{-\Delta_y}\left(u_{\mu} \log v^2 +2u_{\mu}  -u_\mu\log u_\mu^2\right) w_\mu\right] d x d y \\
&&+\int_{\mathbb{R}^d\times\mathbb{T}^n}(\lambda_\mu-\overline{\lambda})\left|w_\mu\right|^2 d x d y \\
 &\equiv& I_\mu+II_\mu+III_\mu.
\end{eqnarray*}
Next we fix an orthonormal basis of eigenfunctions for $-\Delta_y$, that is, $-\Delta_y \varphi_k=\rho_k \varphi_k$ and $\varphi_0=C$, where $C$ is a constant. Using standard elliptic regularity theory and similar to the proof of Lemma 3.10 in \cite{COACJ2020}, we able to show that $v \in L_x^\infty$. In order to estimate $I_\mu$, we write
\begin{equation}\label{eq3.10}
  w_\mu(x, y)=\frac{\Theta^2}{(2\pi)^{n}}\sum_{k \in \mathbb{Z}^n \backslash\{0\}} a_{\mu,k}(x) \varphi_k(y),
\end{equation}
where the eigenfunction $\varphi_0$ does not enter in the development. By using \eqref{eq3.9} and \eqref{eq3.10}, one has
\begin{eqnarray*}
I_\mu &=&\int_{\mathbb{R}^d\times\mathbb{T}^n}\left[\left(\mu-1\right)\left|\nabla_y w_\mu\right|^2-\sqrt{-\Delta_y}u_{\mu} (\log v^2)w_\mu -2\sqrt{-\Delta_y}u_{\mu}w_\mu  \right]d x d y\\
&=&(\mu-1)\int_{\mathbb{R}^d\times\mathbb{T}^n} \left|\nabla_y w_\mu\right|^2dxdy-2\int_{\mathbb{R}^d\times\mathbb{T}^n}   |w_\mu|^2\log v   d x d y-2\int_{\mathbb{R}^d\times\mathbb{T}^n}   |w_\mu|^2   d x d y \\
&\geq&C\sum_{k \neq 0}\left(\mu-1\right)\left|\rho_k\right|^2 \int_{\mathbb{R}^d}\left|a_{\mu,k}(x)\right|^2 d x-C(\|v\|_{L^\infty})\left|\rho_k\right|^2 \sum_{k \neq 0} \int_{\mathbb{R}^d} \left|a_{\mu,k}(x)\right|^2 d x \\
&\geq&0
\end{eqnarray*}
for $\mu\gg1$. It follows from \eqref{eq3.6} that
$$
I I I_\mu=\int_{\mathbb{R}^d\times\mathbb{T}^n}(\lambda_\mu-\overline{\lambda})\left|w_\mu\right|^2 d x d y\rightarrow0 \ \text{as}\ \mu\rightarrow+\infty.
$$
Hence, one has $I_\mu+I I I_\mu \geq 0$ for $\mu$ large enough. In order to estimate $I I_\mu$, notice that, by the Cauchy-Schwartz inequality and \eqref{eq3.9} we get
\begin{eqnarray*}
&&\left|\int_{\mathbb{R}^d\times\mathbb{T}^n} \sqrt{-\Delta_y}\left(u_{\mu} \log v^2 +2u_{\mu}  -u_\mu\log u_\mu^2\right) w_\mu d x dy\right| \\
&\leq&\left\|\sqrt{-\Delta_y}\left(u_{\mu} \log v^2 +2u_{\mu}  -u_\mu\log u_\mu^2\right)\right\|_{L_x^{\frac{2(d+n) }{d+n+2}} L_y^{\frac{2(d+n)}{d+n+2}}  }\left\|w_\mu\right\|_{L_{x, y}^{\frac{2(d+n)}{d+n-2}}} \\
& \leq & C\left\|\nabla_y \left(u_{\mu} \log v^2 +2u_{\mu}  -u_\mu\log u_\mu^2\right)\right\|_{L_x^{\frac{2(d+n)}{d+n+2}} L_y^{\frac{2(d+n)}{d+n+2}}}\left\|w_\mu\right\|_{L_{x, y}^{\frac{2(d+n)}{d+n-2}}}\\
& \leq & C\left\|\nabla_y u_{\mu}\left(\log v- \log u_{\mu}\right) \right\|_{L_x^{\frac{2(d+n)}{d+n+2}} L_y^{\frac{2(d+n)}{d+n+2}}}\left\|w_\mu\right\|_{L_{x, y}^{\frac{2(d+n)}{d+n-2}}} \\
& \leq & C\left\|\nabla_y u_{\mu}\right\|_{L_{x,y}^{\frac{2(d+n)}{d+n-2}}}\left\|\log v- \log u_{\mu}\right\|_{L_{x,y}^{\frac{d+n}{2}}}\left\|w_\mu\right\|_{L_{x, y}^{\frac{2(d+n)}{d+n-2}}} \\
& \leq & C\left\|\sqrt{-\Delta_y}  u_{\mu}\right\|_{L_{x,y}^{\frac{2(d+n)}{d+n-2}}}\left\|\log v- \log u_{\mu}\right\|_{L_{x,y}^{\frac{d+n}{2}}}\left\|w_\mu\right\|_{L_{x, y}^{\frac{2(d+n)}{d+n-2}}}\\
& \leq & C\left\|\log v- \log u_{\mu}\right\|_{L_{x,y}^{\frac{d+n}{2}}}\left\|w_\mu\right\|_{L_{x, y}^{\frac{2(d+n)}{d+n-2}}}^2.
\end{eqnarray*}
Using elementary calculations, we get
\begin{equation*}
  |\log v- \log u_{\mu}|\leq\frac{|v-u_\mu|}{|v|}.
\end{equation*}
Therefore, it follows from Sobolev embedding $H_{x, y}^1\hookrightarrow L_{x, y}^{\frac{2(d+n)}{d+n-2}}$ and interpolation inequality that
\begin{equation*}
  \left|\int_{\mathbb{R}^d\times\mathbb{T}^n} \sqrt{-\Delta_y}  \left(u_{\mu}\left(\log v^2- \log u_{\mu}^2\right)\right) w_\mu d x dy\right|\rightarrow0 \ \text{as}\ \mu\rightarrow+\infty.
\end{equation*}
By combining this information and the structure of $I I_j$, one has
$$
0\geq I I_j \geq\left\|w_\mu\right\|_{H_{x, y}^1}^2  \geq 0 \quad \text { for } \mu>\mu_{**},
$$
so we deduce $w_\mu=0$ for $\mu$ large enough.
\end{proof}
Through the above analysis, we can draw the following conclusions.
\begin{lemma}\label{L3.7}
There are two cases:

$\mathrm{(1)}$ If $m_{\Theta, \mu}<(2 \pi)^{n} \widetilde{m}_{\frac{\Theta}{(2\pi)^{\frac{n}{2}}}}$ holds for all $\mu \in\left(0, +\infty\right)$, then any minimizer $u_\mu$ of $m_{\Theta, \mu}$ satisfies $\partial_y u_\mu \neq 0$.

$\mathrm{(2)}$ If there exists a $\mu_*$ such that  $m_{\Theta, \mu}=(2 \pi)^{n} \widetilde{m}_{\frac{\Theta}{(2\pi)^{\frac{n}{2}}}}$, then any minimizer $u_\mu$ of $m_{\Theta, \mu}$ satisfies $\partial_y u_\mu=0$ provided $y\rightarrow\widetilde{m}_\Theta(y)$ is non-decreasing.
\end{lemma}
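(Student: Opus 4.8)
The statement is a synthesis of the preparatory Lemmas \ref{L3.2}--\ref{L3.6}, so the plan is to dispose of part (1) by a short comparison and to assemble part (2) from the chain of estimates ending in Lemma \ref{L3.6}.

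For part (1) I would argue by contradiction. Suppose a minimizer $u_\mu$ of $m_{\Theta,\mu}$ satisfies $\partial_y u_\mu=0$, i.e. $u_\mu(x,y)=v(x)$ is $y$-independent. Then the $\mu$-weighted term vanishes and integrating in $y$ over $\mathbb{T}^n$ gives
$$\mathcal{I}_\mu(u_\mu)=(2\pi)^n\left(\frac{1}{2}\int_{\mathbb{R}^d}[|\nabla_x v|^2+|v|^2]\,dx-\frac{1}{2}\int_{\mathbb{R}^d}v^2\log v^2\,dx\right)=(2\pi)^n\widetilde{\mathcal{I}}(v).$$
From $\|u_\mu\|_{L^2_{x,y}}=\Theta$ and $y$-independence one reads off $\|v\|_{L^2_x}=\Theta/(2\pi)^{n/2}$, so $v\in\widetilde{S}_{\Theta/(2\pi)^{n/2}}$ and hence $\widetilde{\mathcal{I}}(v)\geq\widetilde{m}_{\Theta/(2\pi)^{n/2}}$. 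This forces $m_{\Theta,\mu}=(2\pi)^n\widetilde{\mathcal{I}}(v)\geq(2\pi)^n\widetilde{m}_{\Theta/(2\pi)^{n/2}}$, contradicting the standing hypothesis of (1). This is precisely the comparison from the first part of Lemma \ref{L3.2} read in reverse.

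For part (2) I would place myself in Case 2 and run the machinery already developed. The hypothesis $m_{\Theta,\mu}=(2\pi)^n\widetilde{m}_{\Theta/(2\pi)^{n/2}}$ together with the monotonicity of $y\mapsto\widetilde{m}_\Theta(y)$ is exactly what Lemma \ref{L3.3} needs in order to upgrade the decay \eqref{eq3.2} to $\mu\|\nabla_y u_\mu\|_2^2\to0$. Lemma \ref{L3.4} supplies the Euler--Lagrange equation \eqref{eq3.5} with multiplier $\lambda_\mu\to\overline{\lambda}$, and Lemma \ref{L3.5} gives strong convergence of $u_\mu$ (after an $\mathbb{R}^d$-translation) to a $y$-independent optimizer $v$ of $\widetilde{m}_{\Theta/(2\pi)^{n/2}}$ solving \eqref{eq3.7}. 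The decisive step is then Lemma \ref{L3.6}: with $w_\mu=\sqrt{-\Delta_y}\,u_\mu$, applying $\sqrt{-\Delta_y}$ to \eqref{eq3.5}, testing against $w_\mu$ and splitting the resulting identity as $I_\mu+II_\mu+III_\mu=0$, one uses the spectral gap $\rho_k\geq\rho_1>0$ for $k\neq0$ together with $\lambda_\mu\to\overline{\lambda}$ to get $I_\mu+III_\mu\geq0$ for $\mu\gg1$, while $II_\mu\geq\|w_\mu\|_{H^1_{x,y}}^2\geq0$. Since $II_\mu\to0$, this pins $w_\mu=0$, i.e. $\nabla_y u_\mu=0$, for $\mu>\mu_{**}$; the constancy of $m_{\Theta,\mu}$ on $[\mu_*,\infty)$ then propagates the conclusion throughout Case 2.

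The main obstacle is the nonlinear term $II_\mu$ in part (2). Because $s\mapsto\log s^2$ is neither Lipschitz nor bounded near the origin, the quantity $\sqrt{-\Delta_y}(u_\mu\log v^2+2u_\mu-u_\mu\log u_\mu^2)$ resists a naive bound; the trick is the commutator-type reduction to $\nabla_y u_\mu(\log v-\log u_\mu)$ followed by the elementary inequality $|\log v-\log u_\mu|\leq|v-u_\mu|/|v|$, which in turn leans on the $L^\infty_x$ bound for $v$ from elliptic regularity and on $v>0$. Combining a mixed-norm H\"older inequality with the Sobolev embedding $H^1_{x,y}\hookrightarrow L^{2(d+n)/(d+n-2)}_{x,y}$ converts the estimate into control of $\|\log v-\log u_\mu\|_{L^{(d+n)/2}_{x,y}}$, which the strong convergence $u_\mu\to v$ of Lemma \ref{L3.5} sends to $0$. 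A secondary delicate point is the slicing inequality underlying Lemma \ref{L3.3}: since the fiber masses $\|u_\mu(\cdot,y)\|_{L^2_x}$ are not constant in $y$, the monotonicity assumption on $y\mapsto\widetilde{m}_\Theta(y)$ is what allows $\int_{\mathbb{T}^n}\widetilde{\mathcal{I}}(u_\mu(\cdot,y))\,dy$ to be bounded below by $(2\pi)^n\widetilde{m}_{\Theta/(2\pi)^{n/2}}$.
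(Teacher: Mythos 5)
Your proposal is correct and takes essentially the same route as the paper: the paper proves Lemma \ref{L3.7} purely as a synthesis of its Case 1/Case 2 dichotomy with Lemmas \ref{L3.1}--\ref{L3.6}, part (1) resting on the observation (implicit in the first comparison of Lemma \ref{L3.2}) that a $y$-independent minimizer would force $m_{\Theta,\mu}\geq(2\pi)^{n}\widetilde{m}_{\frac{\Theta}{(2\pi)^{\frac{n}{2}}}}$, and part (2) on the chain \ref{L3.3}--\ref{L3.6}, exactly as you assemble it. One cosmetic slip: in recounting Lemma \ref{L3.6}, the conclusion $w_\mu=0$ for $\mu>\mu_{**}$ comes from the sandwich $0\geq II_\mu\geq\|w_\mu\|_{H^1_{x,y}}^2\geq 0$ (using $I_\mu+III_\mu\geq 0$ and the vanishing of the nonlinear term), not from ``$II_\mu\to 0$''; since you invoke that lemma as already established, this does not create a gap in your proof of the statement itself.
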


\noindent\textbf{Proof of Theorem \ref{t1.4}}  Note that
\begin{equation*}
  \lim\limits_{\mu\rightarrow\infty}m_{\Theta, \mu}= \lim\limits_{\mu\rightarrow0}m_{\Theta, \mu}=m_\Theta.
\end{equation*}
Therefore, Theorem \ref{t1.4} follows by Lemma \ref{L3.7}.
\section{The proof of Theorem \ref{t1.3}}
In this section, we consider the Cauchy problem associated with \eqref{eq1.1}, that is,
\begin{equation}\label{eq4.1}
  i\partial_tu+\Delta_{x,y} u+u\log |u|^2=0,\ u|_{t=0}=u_0.
\end{equation}
In view of the relative standard of proving theorem a, refer to \cite{{RCIG2018},{TC2003}} for details. Hence, we only give the sketch of the proof and some precautions in this paper.

\textbf{Step 1} Note that logarithmic nonlinearity is not Lipschitz continuous at the origin, so we  regularize the nonlinearity by saturating the logarithm near zero, and consider the following sequence of approximate solutions:
\begin{equation}\label{eq4.2}
  i\partial_tu^{\varepsilon}+\Delta_{x,y} u^{\varepsilon}+\lambda u^{\varepsilon}\log(\varepsilon+ |u^{\varepsilon}|^2)=0,\ u|_{t=0}^{\varepsilon}=u_0,
\end{equation}
where $\varepsilon>0, \lambda=\pm1$. For fixed $\varepsilon>0$, the above
nonlinearity is locally Lipschitzian. It is worth mentioning that the propagator $e^{-it\Delta_{x,y}}$  on $\mathbb{R}^d\times\mathbb{T}^1$ does not satisfy the Strichartz estimates. To overcome this difficulty, we will use the Strichartz type estimate on product spaces proposed by Tzvetkov and
Visciglia \cite{NTNV2012}. Therefore, for every $\varepsilon> 0$, there is a unique, global solution. Again, at fixed $\varepsilon> 0$, the nonlinearity is smooth, so  we know that $u^{\varepsilon}\in C(\mathbb{R};X)$.

\textbf{Step 2} Compactness for the sequence $(u^{\varepsilon})_{\varepsilon}$. If $\lambda=1$, we call \eqref{eq4.2} is focusing. In this case, we can refer to the proof of Theorem 9.3.4 in \cite{TC2003}. If $\lambda=-1$, we call \eqref{eq4.2} is defocusing. In this case, please refer to proof in \cite{RCIG2018}.

\textbf{Step 3} Uniqueness of solution.  First of all, we notice the fact that
\begin{equation}\label{eq4.3}
  \left|\operatorname{Im}\left(\left(z_2 \log \left|z_2\right|^2-z_1 \log \left|z_1\right|^2\right)\left(\bar{z}_2-\bar{z}_1\right)\right)\right| \leqslant 4\left|z_2-z_1\right|^2, \quad \forall z_1, z_2 \in \mathbb{C} .
\end{equation}
For two solutions $u_1, u_2$ to \eqref{eq4.2}, the difference $w=u_2-u_1$ solves
$$
i \partial_t w+ \Delta w +\lambda(\log (|u_2|^2) u_2-\log (|u_1|^2) u_1)=0, \quad w|_{ t=0}=0.
$$
By using the standard $L^2$ estimate and \eqref{eq4.3}, we know
\begin{eqnarray*}
\frac{1}{2} \frac{d}{d t}\|w(t)\|_{L^2(\mathbb{R}^d\times\mathbb{T}^n)}^2 & =&-\lambda \operatorname{Im} \int_{\mathbb{R}^d\times\mathbb{T}^n}(\log (|u_2|^2) u_2-\log(|u_1|^2) u_1)(\bar{u}_2-\bar{u}_1)(t) d x dy \\
& \leq& -4 \lambda\|w(t)\|_{L^2(\mathbb{R}^d\times\mathbb{T}^n)}^2.
\end{eqnarray*}
Therefore, $w \equiv 0$ from Gronwall lemma.
\section{Orbital stability}
\noindent\textbf{Proof of Theorem \ref{t1.4}} Assume that Theorem \ref{t1.4} does not hold,
 there exist $(\psi_{n,0})_n \subset X$ and $(t_n)_n \subset  \mathbb{R}$ such that
\begin{equation}\label{eq5.1}
  \lim_{n\rightarrow\infty}\mathrm{dist}_{X}(\psi_{n,0},\Gamma_\Theta)=0,
\end{equation}
and
\begin{equation}\label{eq5.2}
  \liminf_{n\rightarrow\infty}\mathrm{dist}_{X}(\psi_n(t),\Gamma_\Theta)>0,
\end{equation}
where $\psi_n$ is the global solution of \eqref{eq4.1} with $\psi_n (0) = \psi_{n,0}$. Let $v_n=\psi_n(t_n)$. By using \eqref{eq5.1} and conservation laws, one has
\begin{equation}\label{eq5.3}
  \|v_n\|_2^2=\|\psi_n(t_n)\|_2^2=\|\psi_{n,0}\|_2^2\rightarrow\|u\|_2^2=\Theta^2
\end{equation}
and
\begin{equation}\label{eq5.4}
  \mathcal{I}(v_n)=\mathcal{I}(\psi_n(t_n))=\mathcal{I}(\psi_{n,0})\rightarrow m_\Theta
\end{equation}
as $n\rightarrow\infty$. By fundamental perturbation arguments, there exists $\widetilde{\psi}_n \in S_\Theta$ we have
$\|\widetilde{\psi}_n\|_2 =\Theta+ o_n(1)$, $\mathcal{I}(\widetilde{\psi}_n) = m_\Theta + o_n(1)$.
This implies that the sequence $(\widetilde{\psi}_n)_n$ is a minimizing sequence of $m_\Theta$. From the proof of Theorem \ref{t1.1} we know that up to a subsequence, $\widetilde{\psi}_n$ converges strongly to a minimizer $u_\Theta$ of $m_\Theta$ in $X$, which contradicts \eqref{eq5.2}.

\textbf{Data availability} This article has no additional data.

\textbf{Conflict of interest} On behalf of all authors, the corresponding author states that there is no Conflict of
interest.

\end{document}